\documentclass[a4paper]{article}
\usepackage{amsthm,amssymb,amsmath,enumerate}
\usepackage{algorithm}

\usepackage{tikz}
\usetikzlibrary{backgrounds}
\usetikzlibrary{decorations}
\usetikzlibrary{decorations.pathmorphing}

\tikzstyle{vx}=[thick,circle,inner sep=0.cm, minimum size=2mm, fill=white, draw=black]
\tikzstyle{edg}=[very thick]
\tikzstyle{rededge}=[very thick,red]
\tikzstyle{point}=[draw,circle,inner sep=0.cm, minimum size=1mm, fill=black]
\tikzstyle{pointer}=[thick,->,shorten >=2pt,color=dunkelgrau]
\tikzstyle{facebdry}=[color=auchblau, very thick] % face boundary
\tikzstyle{face}=[facebdry,fill=hellblau]
\tikzstyle{nface}=[color=hellblau,fill=hellblau,thick] % naked face, without boundary
\tikzstyle{tinyvx}=[thick,circle,inner sep=0.cm, minimum size=1.3mm, fill=white, draw=black]
\tikzstyle{tinyA}=[thick,circle,inner sep=0.cm, minimum size=1.3mm, fill=hellgrau, draw=black]

\tikzstyle{patedg}=[edg,decorate, decoration={random steps,segment length=3pt,amplitude=1pt}]

%%%%%%%%%%%%%%%%%%%%%%%%%% Farben %%%%%%%%%%%%%%%%%%%%%%%%%%
\colorlet{auchblau}{blue!60!white}
\colorlet{hellblau}{blue!20!white}
\colorlet{hellrot}{red!40!white}
\colorlet{hellgrau}{black!30!white}
\colorlet{grauish}{black!45!white}
\colorlet{dunkelgrau}{black!60!white}

\newcommand{\block}[1]{
\begin{scope}[shift={#1}]
\draw[line width=2pt,dunkelgrau,fill=hellgrau] (0,0) circle [radius=\crad];
\end{scope}
}

\newtheorem{definition}{Definition}
\newtheorem{proposition}[definition]{Proposition}
\newtheorem{theorem}[definition]{Theorem}

\newtheorem{lemma}[definition]{Lemma}

\newcommand{\comment}[1]{}

\newcommand{\emtext}[1]{\text{\em #1}}

%

% \begin{equation}\label{somelabel}
% \begin{minipage}[c]{0.8\textwidth}\em
% text goes in here
% \end{minipage}\ignorespacesafterend 
% \end{equation} 

\newcommand{\bigO}{O}

\title{Even $A$-cycles have the edge-Erd\H os-P\'osa property}
\author{Henning Bruhn\thanks{Partially supported by DFG, grant no.\  321904558}}
\date{}

\begin{document}
\maketitle

\begin{abstract}
I prove that even $A$-cycles have the edge-Erd\H os-P\'osa property.
\end{abstract}

\section{Introduction}

Recently, it has become ever more clear that there is a significant
difference between the ordinary Erd\H os-P\'osa property and its
edge-version. Initially it might have seen as if packing and covering
was largely the same 
in the vertex world and in the edge
world: as Erd\H os and P\'osa~\cite{EP65} showed every graph either has $k$ 
\emph{vertex-disjoint} cycles or a \emph{vertex set} of size $\bigO(k\log k)$ meeting all cycles,
and similarly, there are always either $k$ \emph{edge-disjoint} cycles 
or an \emph{edge set} of size $\bigO(k\log k)$ meeting all cycles. 
The same is true for many other classes of target objects. 

Say that a class $\mathcal F$ of graphs has the (ordinary) \emph{Erd\H os-P\'osa property}
(resp.\ the \emph{edge-Erd\H os-P\'osa property})
if there is a function $f$ such that for every positive integer $k$, every graph $G$
either contains $k$ disjoint (resp.\ edge-disjoint) subgraphs each isomorphic to some graph in $\mathcal F$,
or if there is a vertex set $X$ (resp.\ an edge set) of size $|X|\leq f(k)$ such that $G-X$ is devoid
of subgraphs from $\mathcal F$. 
Then not only have cycles  both the ordinary and the edge-Erd\H os-P\'osa property but this 
is also true for even cycles~\cite{DNL87,Tho88,BHJ18}; for $A$-cycles\footnote{Technically,
the definition of the Erd\H os-P\'osa property only applies to graph classes but not to 
such objects as $A$-cycles, which could be seen as  graphs with labels. 
I assume the property to extend to $A$-cycles and similar objects in the obvious way.  
}~\cite{KKM11,PW12}, 
cycles that each contain at least one vertex from 
some fixed set $A$; long cycles~\cite{RS86,BHJ17}, cycles that have a certain pre-fixed minimum length; 
$K_4$-subdivisions~\cite{RS86,BH18};
and many other classes of graphs. 

Recently, however, differences have been discovered. While I do not know of any 
 class that has the edge-Erd\H os-P\'osa property but not the ordinary property, the converse
does exist: even $A$-paths have the ordinary property but not the edge-property~\cite{BHJ18}. The same 
is true for subdivisions of subcubic trees of large pathwidth~\cite{BHJ18b}. 
%In this article, 
I will present a class that sits, in some sense, right at the edge: even $A$-cycles
turn out to have both properties, the ordinary and the edge-Erd\H os-P\'osa property, but 
slightly modifying the constraints breaks the edge-property but not the ordinary one.

That even $A$-cycles have the ordinary Erd\H os-P\'osa property is a special case of a much more 
general result due to Kakimura and Kawarabayashi:

\begin{theorem}[Kakimura and Kawarabayashi~\cite{KK12}]\label{kkthm}
Let $m>1$ be an integer. Then
$A$-cycles of length divisible by $m$ have the Erd\H os-P\'osa property.
\end{theorem}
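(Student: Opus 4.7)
The plan is to combine the Erd\H{o}s-P\'osa property for plain $A$-cycles (from Kakimura, Kawarabayashi and Marx~\cite{KKM11}) with a length-correction technique driven by structural graph theory. First, I would apply that earlier theorem with parameter $N = N(k, m)$ chosen large enough: either we find $N$ vertex-disjoint $A$-cycles in $G$, or a vertex set $X$ of size bounded in $k$ hits every $A$-cycle. In the latter case $X$ also hits every $A$-cycle of length divisible by $m$, and we are done.

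Second, I would tackle the case where $G$ contains $N$ vertex-disjoint $A$-cycles by a tree-width dichotomy. If $\mathrm{tw}(G)$ is bounded by some $w(k, m)$, a dynamic programming argument on a tree decomposition works: each bag stores the residues modulo $m$ of the partial $A$-paths passing through, and cycles are closed when endpoints meet in $A$. A standard tree-width DP then either produces $k$ vertex-disjoint $A$-cycles of length divisible by $m$, or a hitting set whose size is bounded in $k$, $m$ and the tree-width.

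Third, if the tree-width exceeds $w(k, m)$, I would invoke the Grid/Wall Minor Theorem to produce a large wall $W$ in $G$ and, after an irrelevant-vertex reduction, assume that $W$ is flat. Using the $N$ vertex-disjoint $A$-cycles, I would route a linkage from $A$ to $k$ pairwise separated vertical strips of $W$. Within a flat wall, rerouting a subpath around a single brick changes its length by an additive constant, and composing a bounded number of such local rerouting operations covers every residue class modulo $m$. Performing this in each of the $k$ strips independently yields $k$ vertex-disjoint $A$-cycles of length divisible by $m$.

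The main obstacle is the length-adjustment step. Having many $A$-cycles does not on its own control their residues modulo $m$, so the correction must be carried out through the wall, and it must be executed $k$ times simultaneously without the corrections interfering. Ensuring a clean interaction between the flat wall, the $A$-linkage, and the local rerouting — especially when the wall is not a priori flat, which is where the irrelevant vertex machinery enters — is the technical heart of the argument.
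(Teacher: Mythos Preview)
The paper does not prove this theorem at all: Theorem~\ref{kkthm} is quoted from Kakimura and Kawarabayashi~\cite{KK12} and used as a black box. In particular, the author invokes it only for $m=2$ (via Lemma~\ref{singlehitlem}) to reduce the edge-Erd\H os-P\'osa question to the single-vertex-hitting-set situation, and remarks that no explicit bound on the hitting set is extracted from~\cite{KK12}. So there is no ``paper's own proof'' to compare your proposal against.

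As for the proposal itself, your outline is broadly in the right spirit for this kind of result---bounded tree-width handled combinatorially, large tree-width handled via a wall and rerouting---and indeed the original paper of Kakimura and Kawarabayashi does rely heavily on the graph-minors machinery. That said, two of your steps are more fragile than the sketch suggests. The ``DP on a tree decomposition'' in the bounded-tree-width case does not by itself yield an Erd\H os-P\'osa duality: dynamic programming can decide existence or count, but extracting either $k$ disjoint copies or a small hitting set needs an additional packing--covering argument (e.g.\ a careful marking/unmarking on the decomposition, or an LP-type rounding). And in the large-tree-width case, the claim that ``rerouting around a single brick changes the length by an additive constant'' and that a bounded number of such moves realises every residue mod~$m$ requires that the wall offer subpaths of at least two distinct residues that are coprime to~$m$; in a flat wall inside a bipartite host, for instance, all rerouting options may preserve parity, so the residue control has to come from elsewhere (typically from the $A$-cycles themselves or from the structure outside the flat wall). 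These are exactly the places where~\cite{KK12} does real work.
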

%Kakimura and Kawarabayashi do not specify a size of the hitting set. 

I will prove:
\begin{theorem}\label{evenathm}
Even $A$-cycles have the edge-Erd\H os-P\'osa property.
\end{theorem}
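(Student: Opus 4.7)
The plan is to argue by induction on $k$. Setting $f(k) = f(k-1) + h(k)$ for a function $h$ still to be specified, the inductive step reduces to the following claim: \emph{if $G$ admits no edge hitting set of size $f(k)$ for its even $A$-cycles, then $G$ contains an even $A$-cycle $C$ of length at most $h(k)$}. Once such a $C$ is at hand, $G - E(C)$ still admits no edge hitting set of size $f(k-1)$ for even $A$-cycles (adjoining $E(C)$ to such a set would contradict the standing hypothesis on $G$), so the induction hypothesis yields $k-1$ edge-disjoint even $A$-cycles in $G - E(C)$, which together with $C$ complete the packing.

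The central task is therefore to extract a short even $A$-cycle. The starting point is Theorem \ref{kkthm}: either $G$ contains $k$ vertex-disjoint, and hence $k$ edge-disjoint, even $A$-cycles, in which case the theorem holds outright, or there is a vertex set $S$ of size at most $g(k)$ meeting every even $A$-cycle. In the latter case every even $A$-cycle passes through both $A$ and $S$. To find a short one I would work in the bipartite double cover $\tilde G$ of $G$, where an even $A$-cycle in $G$ corresponds to a suitable pair of internally disjoint paths between the copies of $A$ and the copies of $S$ with compatible parities. Breadth-first searches from the copies of $A$ towards those of $S$ should then produce either such a pair of short paths, yielding the desired short even $A$-cycle, or else small edge cuts around some $s \in S$ which, combined with the bounded set $E(G[S])$, form an edge hitting set of size at most $f(k)$ for even $A$-cycles in $G$ --- contradicting the assumption.

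The principal obstacle is that the vertices of $S$ may have unbounded degree, so two short paths returned by the BFS need not be edge-disjoint in a neighbourhood of $S$, and the natural edge cuts witnessing the ``no short cycle'' alternative may be large. I expect this to force an iterative argument: at each round either another short even $A$-cycle is extracted, edge-disjoint from those already found and charged against the budget $h(k)$, or a candidate edge cut is enlarged in a controlled way. Parity must be tracked throughout, which is exactly what working in $\tilde G$ is designed to facilitate; the final delicate point will be converting the structure found in $\tilde G$ into an honest edge-disjoint even $A$-cycle in $G$, which is likely to require a case analysis according to whether the relevant short paths run through a common vertex of $S$ or through distinct ones.
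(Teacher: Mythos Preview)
Your proposal is an outline, not a proof, and the gap is precisely where all the work lies. The entire content of the theorem is the dichotomy ``short even $A$-cycle or small edge hitting set'', and you have not established it. The BFS-in-the-double-cover sketch is too vague to evaluate: you do not say what structure in $\tilde G$ actually corresponds to an even $A$-cycle through $S$ in $G$, why a BFS would find one of bounded length, or why failure of the BFS would yield edge cuts of size bounded in $k$ alone. You yourself flag the main obstruction---vertices of $S$ can have unbounded degree, so neither the putative short cycles nor the putative cuts near $S$ are a priori small---and ``I expect this to force an iterative argument'' is not a resolution. Nothing in your write-up controls how many rounds such an iteration would take or why the accumulated cut stays bounded by a function of $k$.

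The paper proceeds along an entirely different route. Via Lemma~\ref{singlehitlem} it reduces to a graph with a \emph{single} vertex $z$ meeting all even $A$-cycles, and then analyses the block-tree of $G-z$. The engine is Lemma~\ref{thetalem}: any theta graph with two subdivided edges through $A$ contains an even $A$-cycle. This is used repeatedly to show that if the block-tree is too rich (large degree, many disjoint three-leaf subtrees, or a long path; cf.\ Lemma~\ref{alttreelem}), one can assemble $k$ edge-disjoint such theta graphs and hence $k$ edge-disjoint even $A$-cycles. Otherwise the blocks meeting $A$ organise into boundedly many ``strings'' and stray blocks, each admitting a bounded edge hitting set (Lemmas~\ref{blockhitlem} and~\ref{stringlem}), and their union is the desired $F$. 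No short-cycle extraction, no double cover, and no BFS appear; parity is handled entirely by the theta-graph lemma.
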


Allowing more complicated constraints on the length of the cycles, as in the theorem 
of Kakimura and Kawarabayashi, breaks the edge-property. $A$-cycles of a length divisible by~$3$,
for example, do not have the edge-Erd\H os-P\'osa property; see Section~\ref{secbreak}.
We can also insist on a certain minimum length, i.e.\ only consider even $A$-cycles of a length
at least~$\ell$. Imposing a minimum length never seems to affect the ordinary Erd\H os-P\'osa
property, and indeed long even $A$-cycles still have the ordinary property~\cite{BJS18X}. In 
contrast, the edge-property is lost; see Section~\ref{secbreak}. 

How large is the \emph{hitting set} in Theorem~\ref{evenathm}? 
(The hitting set is the vertex or edge set that meets all subgraphs from the target class.)
 Unfortunately, 
the size  depends on the vertex hitting set of Theorem~\ref{kkthm}, and Kakimura and 
Kawarabayashi do not give any explicit bound on their hitting set.
The bound  is likely very large %, however,  
as Kakimura and Kawarabayashi  extensively use the techniques of minor theory, where bounds tend to become huge. 
It is possible, however, to say what the proof of Theorem~\ref{evenathm} adds. If $g(k)$
is an upper bound on the size of a vertex hitting set for even $A$-cycles then 
the edge hitting set of Theorem~\ref{evenathm} has size at most
 $1080k^5\cdot g(k)$. Clearly, this is far from optimal. 

\medskip

There is an extensively list of result on the ordinary Erd\H os-P\'osa property, 
see for instance~\cite{RT17} and~\cite{BHJ18} for  references. There is far less known about
the edge-property. Even cycles~\cite{BHJ18} as well as $A$-cycles~\cite{PW12} were known to have the edge-property.
For other simple classes, such as the subdivisions of a ladder graph with four rungs or of
the binary tree of height~$3$, it is, on the other hand, still open whether they possess the edge-Erd\H os-P\'osa property.

\section{More constraints break the edge-property}\label{secbreak}

Before proving the main theorem I will first briefly discuss how extensions of Theorem~\ref{evenathm} fail.
In the  notation I follow Diestel~\cite{diestelBook17}, where also basic graph-theoretic concepts may
be found. All graphs will be simple and finite.

\begin{proposition}
Let $\ell\geq 5$ be an integer. 
Even $A$-cycles of length at least~$\ell$
 do not have the edge-Erd\H os-P\'osa property.
\end{proposition}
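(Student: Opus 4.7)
Plan:

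The plan is to exhibit, for each integer $k\geq 1$, a graph $G_k$ together with a vertex subset $A=A_k$ such that
(i) $G_k$ contains at most one (or a bounded number of) pairwise edge-disjoint even $A$-cycle(s) of length $\geq\ell$, while
(ii) every edge hitting set for even $A$-cycles of length $\geq\ell$ in $G_k$ has at least $k$ edges.
This is the usual way of disproving an edge-Erd\H os-P\'osa property: packing is bounded while covering is unbounded.

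The construction will be a chain of $k$ identical gadgets. Each gadget is a small graph with two designated \emph{interface} vertices $s,t$, an $A$-vertex $a$ and a distinguished edge $e$ such that every even cycle of length $\geq\ell$ through $a$ and using only edges of the gadget must contain $e$. A natural candidate is a theta-graph: three internally disjoint $s$--$t$ paths with $a$ placed on one of them and $e$ sitting on another, whose lengths $p_1,p_2,p_3$ are chosen modulo $2$ so that exactly one of the three pairwise unions $p_i+p_j$ is both even and at least $\ell$, and this union involves $e$ and $a$. The constraint $\ell\geq 5$ is what gives enough slack to prescribe both a parity and a minimum length in such a small gadget.

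Next, I would chain $k$ copies $G^{(1)},\dots,G^{(k)}$ by identifying the $t$-vertex of $G^{(i)}$ with the $s$-vertex of $G^{(i+1)}$, possibly closing the sequence into a ring. The parities of the connecting links are picked so that the only way to create an even $A$-cycle of length $\geq\ell$ in the whole graph is to traverse a single gadget (using its one admissible even cycle) or to traverse them all in a prescribed parity pattern. In both situations, any such cycle in $G^{(i)}$ uses its distinguished edge $e_i$, which immediately yields the hitting-set bound of~(ii) because $e_1,\dots,e_k$ are pairwise distinct and each $e_i$ is \emph{necessary} to destroy the even $A$-cycle contained in $G^{(i)}$. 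For~(i), one observes that every long even $A$-cycle must also cross a common ``backbone'' in the chain, or else shares an internal path with the unique admissible cycle of some gadget, forcing any two long even $A$-cycles to overlap on at least one edge.

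The main obstacle is the last step: ruling out \emph{unintended} long even $A$-cycles that combine edges from several gadgets in clever ways. Such combined cycles could either destroy the hitting-set lower bound (by allowing a single edge to kill many cycles at once) or create many edge-disjoint cycles. The argument therefore boils down to a careful parity and length bookkeeping over all ways of entering and leaving each gadget through its interface vertices, using that $\ell\geq 5$ to exclude degenerate short cycles, and using the parity choices inside the gadget to rule out the wrong-parity combinations. This verification is the technical heart of the proposition.
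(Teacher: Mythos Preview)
Your plan has a genuine structural gap that cannot be fixed by parity bookkeeping alone.

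First, your argument for the hitting-set lower bound~(ii) is self-defeating. You write that ``each $e_i$ is necessary to destroy the even $A$-cycle contained in $G^{(i)}$.'' But if each gadget $G^{(i)}$ contains a long even $A$-cycle $C_i$, then $C_1,\dots,C_k$ are already $k$ pairwise edge-disjoint long even $A$-cycles (distinct gadgets share only a vertex), so the packing number is at least $k$, destroying~(i). Conversely, if no gadget contains a local long even $A$-cycle, then in a chain joined at cut vertices there are \emph{no} long even $A$-cycles at all, because every cycle of a graph lies in a single block; the empty set is then a hitting set. So the pure chain is hopeless either way.

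Second, the ring variant does not rescue the construction. In a ring of theta gadgets, every non-local cycle is a global cycle choosing one of the three $s$--$t$ paths in each gadget. Deleting one edge from each of the three paths of a \emph{single} gadget---three edges in total, independent of $k$---destroys every global cycle. If you additionally arrange that the local theta cycles are invalid (wrong parity or too short), you have killed all long even $A$-cycles with $O(1)$ edges. The same happens with two-path gadgets: two edges suffice. The difficulty you flag at the end (``ruling out unintended long even $A$-cycles'') is therefore not the real obstacle; the real obstacle is that a ring of bounded-size gadgets has bounded edge-connectivity, so the covering number is bounded regardless of parities.

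The paper's construction is of a completely different nature. It uses a large bipartite wall $W$ (of size $10h\times 10h$) together with two hub vertices $u,v$ joined through a set $A$ of new vertices, with $u$ attached to the left side of $W$ in one colour class and $v$ to the right side in the other. One row of the wall is modified so that its edges break the bipartition. Every long even $A$-cycle must then traverse the wall from the $u$-side to the $v$-side and pass through the modified top row, which forces any two such cycles to share an edge, giving~(i). For~(ii), the wall's $\Theta(h)$-edge-connectivity and the $\Theta(h)$ attachment edges at $u$ and $v$ guarantee that no set of at most $h$ edges can destroy all such traversals. The high edge-connectivity of the wall is precisely what your gadget chain lacks and is the key idea you are missing.
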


Fix an $\ell\geq 5$, and call a cycle of length at least~$\ell$ \emph{long}. 
We construct for every integer $h>0$ a graph $G_h$ that does not contain two edge-disjoint 
long even $A$-cycles and that does not admit an edge set of size at most~$h$ that meets
all long even $A$-cycles. This then shows that long even $A$-cycles do not 
have the edge-Erd\H os-P\'osa property. 

To construct $G_h$ we start with an (elementary) wall $W$ of size $10h\times 10h$. 
(See for instance~\cite{BJS18X} for a formal 
definition of a wall, or Figure~\ref{Acycfig} for a picture of a $6\times 6$-wall.)
Let $U,V$ be the two bipartition classes of the bipartite wall $W$.
We add a complete bipartite graph with one bipartition class consisting of two vertices, $u$ and $v$,
and the other consisting of the vertex set $A$ of size~$10h$. We link $u$ to the left-most
vertex in every second row of the wall $W$, and $v$ to the right-most vertex in every second
row such that the neighbours of $u$ in $W$ are all in $U$ and the neighbours of $v$ in $W$ are all in $V$.
Finally, we suppress the vertices of $V$ in the top row such that only the vertices of $V$
remain in the top row; see Figure~\ref{Acycfig}. (\emph{Suppressing} a vertex of degree~$2$ means
to replace it by an edge between its neighbours.)

We first observe that any path in $W$ between a neighbour of $u$ and a neighbour of $v$ 
has odd length, unless it passes through an odd number of edges from the top row. Thus, any even $A$-cycle
that meets the wall needs to traverse the wall from left to right, from a neighbour of $u$ to a neighbour
of $v$, and to pass through at least one edge from the top row. Clearly, there cannot  
be two such edge-disjoint cycles. On the other hand, any cycle of length at least~$5$ has to meet the wall.

To see that there is no set of at most $h$ edges that meets all 
long even $A$-cycles, it suffices to observe that $u$ and $v$ are well connected to the wall (each with 
$5h$ edges), that the wall is sufficiently large and that it contains many parity breaking edges, the 
edges in the top row.

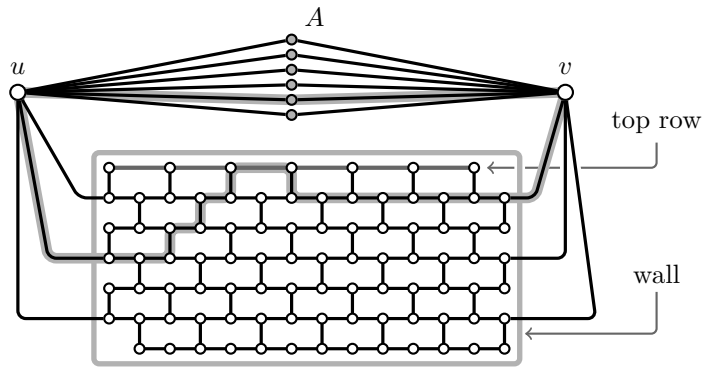
\begin{figure}[ht]
\centering
\begin{tikzpicture}
\tikzstyle{tinyvx}=[thick,circle,inner sep=0.cm, minimum size=1.3mm, fill=white, draw=black]

\def\wallheight{6}
\def\brickheight{0.4}

\pgfmathtruncatemacro{\lastrow}{\wallheight}
\pgfmathtruncatemacro{\penultimaterow}{\wallheight-1}
\pgfmathtruncatemacro{\lastrowshift}{mod(\wallheight,2)}
\pgfmathtruncatemacro{\lastx}{2*\wallheight+1}

% frame around wall
\def\offset{0.2}
\draw[line width=2pt,rounded corners=2pt, hellgrau] (0-\offset,0-\offset) rectangle (2*\wallheight*\brickheight+\brickheight+\offset,\wallheight*\brickheight+\offset);

\node (W) at (2*\wallheight*\brickheight+\brickheight+2,1) {wall};
\draw[pointer, rounded corners=2pt] (W) to (2*\wallheight*\brickheight+\brickheight+2,0.5*\brickheight) to 
(2*\wallheight*\brickheight+\brickheight+0.2,0.5*\brickheight);

\node[align=center] (T) at (2*\wallheight*\brickheight+\brickheight+2,3) {top row};

\draw[edg] (\brickheight,0) -- (2*\wallheight*\brickheight+\brickheight,0);
\foreach \i in {1,...,\penultimaterow}{
  \draw[edg] (0,\i*\brickheight) -- (2*\wallheight*\brickheight+\brickheight,\i*\brickheight);
}

% last row!
\draw[edg,dunkelgrau,ultra thick] (\lastrowshift*\brickheight,\lastrow*\brickheight) to ++(2*\wallheight*\brickheight,0);

\foreach \j in {0,2,...,\penultimaterow}{
  \foreach \i in {0,...,\wallheight}{
    \draw[edg] (2*\i*\brickheight+\brickheight,\j*\brickheight) to ++(0,\brickheight);
  }
}
\foreach \j in {1,3,...,\penultimaterow}{
  \foreach \i in {0,...,\wallheight}{
    \draw[edg] (2*\i*\brickheight,\j*\brickheight) to ++(0,\brickheight);
  }
}

\def\first{0}

\foreach \i in {1,...,\lastx}{
  \node[tinyvx] (w\i\first) at (\i*\brickheight,0){};
}
\foreach \j in {1,...,\penultimaterow}{
  \foreach \i in {0,...,\lastx}{
    \node[tinyvx] (w\i\j) at (\i*\brickheight,\j*\brickheight){};
  }
}
\foreach \i in {1,3,...,\lastx}{
  \node[tinyvx] (w\i\lastrow) at (\i*\brickheight+\lastrowshift*\brickheight-\brickheight,\lastrow*\brickheight){};
}

\node[vx,label=above:$u$] (A1) at (-\wallheight*0.5*\brickheight,\wallheight*\brickheight+1){};
\node[vx,label=above:$v$] (A2) at (\wallheight*\brickheight*2.5,\wallheight*\brickheight+1){};

\foreach \i in {1,...,\wallheight}{
  \node[tinyA] (a\i) at (\wallheight*\brickheight,\wallheight*\brickheight+\i*0.5*\brickheight+0.5){};
  \draw[edg] (A1) -- (a\i) -- (A2);
}

\def\hstep{0.2}

\foreach \j in {1,3,...,\penultimaterow}{
  \draw[edg,rounded corners=3pt] (w0\j) -- ++(-\wallheight*\hstep+\j*\hstep-\hstep,0) -- (A1); 
  \draw[edg,rounded corners=3pt] (w\lastx\j) -- ++(+\wallheight*\hstep-\j*\hstep+\hstep,0) -- (A2); 
}

\node at (\wallheight*\brickheight+0.3,\wallheight*\brickheight+\wallheight*0.5*\brickheight+0.8){$A$};

\begin{scope}[on background layer]
\draw[pointer, rounded corners=2pt] (T) to (2*\wallheight*\brickheight+\brickheight+2,\wallheight*\brickheight) to 
(2*\wallheight*\brickheight+0.1,\wallheight*\brickheight);

\draw[line width=4pt,rounded corners=2pt, white] (0-\offset,0-\offset) rectangle (2*\wallheight*\brickheight+\brickheight+\offset,\wallheight*\brickheight+\offset);

\foreach \j in {1,3,...,\penultimaterow}{
  \draw[line width=4pt,white,rounded corners=3pt] (w\lastx\j) -- ++(+\wallheight*\hstep-\j*\hstep+\hstep,0) -- (A2); 
}

\draw[line width=4pt,hellgrau,rounded corners=3pt] (a2.center) -- (A1.center) -- (-2*\brickheight,3*\brickheight) --
(w03.center) -- ++(2*\brickheight,0) -- ++(0,\brickheight)
-- ++(\brickheight,0) -- ++(0,\brickheight) -- ++(\brickheight,0) -- ++(0,\brickheight) -- ++(2*\brickheight,0)
-- ++(0,-\brickheight) -- ++(8*\brickheight,0)  -- (A2.center) -- (a2.center);

\end{scope}
\end{tikzpicture}
\caption{Long even $A$-cycles do not have the edge-Erd\H os-P\'osa property. 
A long even $A$-cycle is shown in grey.}\label{Acycfig}
\end{figure}

Theorem~\ref{kkthm} of Kakimura and Kawarabayashi covers quite general modularity constraints on 
the length of the $A$-cycles. All of these, except for requiring the cycles to be even, break the
edge-Erd\H os-P\'osa property: 
\begin{proposition}\label{modprop}
For integers $m>2$, 
the class of $A$-cycles of length $\equiv 0\text{ mod }m$
 does not have the edge-Erd\H os-P\'osa property.
\end{proposition}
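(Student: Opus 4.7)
The plan is to construct, for each integer $h > 0$, a graph $G_h$ that contains no two edge-disjoint $A$-cycles of length divisible by $m$ yet admits no edge hitting set of size at most $h$ for this class. The construction will follow the same blueprint as in the previous proposition: a sufficiently large wall $W$ with apex vertices $u$ and $v$ joined to alternating left- and right-boundary vertices of $W$, together with a set $A$ of $\Theta(h)$ vertices connected by a complete bipartite graph to $\{u,v\}$.

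The essential modification is to replace the parity-breaking top row by a modularity-breaking gadget tailored to $m$. Concretely, I would subdivide each edge of the wall into a path whose length is calibrated so that every path through $W$ from a neighbour of $u$ to a neighbour of $v$ carries a predictable residue modulo $m$, and so that the paths whose residue is $-2 \bmod m$---the ones that, after prepending the $u$-$a$-$v$ shortcut of length $2$, yield $A$-cycles of length divisible by $m$---must pass through a distinguished set of \emph{breaker} edges situated in a single row of the wall. A natural first attempt is to subdivide non-breaker edges into paths of length $m$ and to tune the breaker edges so that the length shift they induce is exactly what is needed to reach residue $-2$.

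The verification then splits into two parts. For the packing side, I would argue that any $A$-cycle of length $\equiv 0 \bmod m$ must traverse the wall from left to right and use the prescribed breaker pattern; by an analysis of the wall's structure, paralleling the one sketched for the even case, two such cycles are forced to share at least one edge, so no two can be edge-disjoint. For the covering side, the wall's size together with the abundance of admissible routes produces many $A$-cycles of length divisible by $m$ that cannot collectively be pierced by fewer than $h$ edges, because $u$ and $v$ each have $\Theta(h)$ connections to the wall and the breaker row contributes $\Theta(h)$ further choices for how to achieve the desired residue.

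The main obstacle is the precise design of the modularity-breaking gadget when $m > 2$. For $m = 2$ a single non-bipartite edge flips parity, and a path using an odd number of such flips has the desired parity, which combined with the wall topology is enough to force sharing between valid cycles. For $m \geq 3$ the length-residue space is richer, and a naive subdivision leaves too many edge-disjoint length-$0 \bmod m$ routes; the gadget must simultaneously encode the required residue shift and a topological bottleneck forcing any two valid cycles to share a breaker edge. Once this gadget is established, the remaining arguments are in direct analogy with the construction given in the previous proposition.
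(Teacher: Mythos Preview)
Your plan is exactly the paper's approach, but you have explicitly left the one non-trivial step---the precise subdivision scheme---unresolved, so the proposal is not yet a proof. The paper fills this gap with a surprisingly simple choice: starting from the same graph $G_h$, subdivide every edge between $u$ and $A$ into a path of length $m-2$, leave the edges between $v$ and $A$ and the edges of the top row untouched, and subdivide \emph{all} remaining edges (the non-top-row wall edges and the $u$--wall and $v$--wall edges) into paths of length $m$. Then every edge outside the top row and outside the $u$--$A$--$v$ bipartite part contributes $0$ modulo $m$, while a $u$--$a$--$v$ detour contributes $(m-2)+1=m-1$; hence an $A$-cycle through the wall has length $\equiv -1 + (\text{number of top-row edges used}) \pmod m$, and so must use the top row. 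The rest of the packing/covering argument is then literally the same as in the previous proposition.

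Two remarks on your sketch. First, your tentative idea of keeping the $u$--$a$--$v$ shortcut at length~$2$ is not safe: for $m=4$ the complete bipartite graph on $\{u,v\}$ and $A$ then already contains $4$-cycles of length $\equiv 0 \pmod m$, and these avoid the wall entirely, so the ``no two edge-disjoint'' claim would fail. The paper's choice of lengths $m-2$ and $1$ for the two sides of $K_{2,|A|}$ makes every cycle there have length $2(m-2)+2=2m-2$, which is $\not\equiv 0 \pmod m$ precisely because $m>2$; this is where the hypothesis $m>2$ enters. Second, your worry that ``a naive subdivision leaves too many edge-disjoint length-$0\bmod m$ routes'' is misplaced: once all non-breaker edges have length divisible by $m$, there are no extra residues to juggle, and the topological bottleneck is inherited verbatim from the even case.
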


The construction is quite similar to the one for long even $A$-cycles; it suffices
to subdivide the edges in the graph $G_h$ shown in Figure~\ref{Acycfig} in a suitable way:
Subdivide all edges between $u$ and $A$ such that the
resulting paths have length~$m-2$, leave the edges between $v$ and $A$ and 
the edges in the top row (in grey) as they are, but subdivide all other edges so that they become
paths of length~$m$. Then, $A$-cycles of length $\equiv 0\text{ mod }m$ will have the same 
form as before: In particular, they will contain a subpath that traverses the wall from left to 
right and passes through an edge in the top row in between. Here, we use that $m\neq 2$ to 
exclude that the subdivision of the complete bipartite graph between $A$ and $\{u,v\}$ 
contains an $A$-cycle of length $\equiv 0\text{ mod }m$. Any cycle there has 
length $2(m-2)+2\cdot 1=2m-2\not\equiv 0$ whenever $m\neq 2$.

\section{Basic definitions and lemmas}

In the remainder of the article I will prove Theorem~\ref{evenathm}. In this 
section I collect simple lemmas and a little bit of notation.  
In particular, we will use Diestel's~\cite{diestelBook17} path notation. That is, if $P$ is a 
path that contains the vertices $u$ and $v$  then $uPv$
is the subpath of $P$ with endvertices $u$ and $v$.

A second central notion is that of a \emph{block-tree} of a connected graph $G$. 
If $\mathcal B$ is the set of blocks of $G$ and $C$ the set of cutvertices of $G$
then the block-tree of $G$ is defined on $\mathcal B\cup C$ as vertex set such that 
$Bc$ with $B\in\mathcal B$ and $c\in C$ is an edge precisely when $c$ lies in $B$.

A \emph{theta graph} is a subdivision of the multigraph
consisting of two vertices joined by three parallel edges. 
As we deal with (simple) graphs we insist that two of the
parallel edges need to be subdivided. The two vertices of degree~$3$
in a theta graph are its \emph{branch vertices}, the three internally disjoint
paths between the branch vertices are its \emph{subdivided edges}.

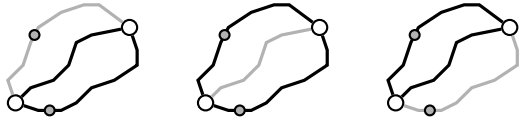
\begin{figure}[ht]
\centering
\begin{tikzpicture}
\node[vx] (u) at (0,0){};
\node[vx] (v) at (1.5,1){};
\draw[edg,hellgrau] (u) to ++(-0.1,0.3) to ++(0.2,0.2) to ++(0.1,0.3) to node[tinyA]{} ++(0.1,0.2) to ++(0.3,0.2) to ++(0.3,0.1) 
to ++(0.2,0) to (v);
\draw[edg] (u) to ++(0.2,0.2) to ++(0.3,0.1) to ++(0.2,0.2) to ++(0.1,0.3) to ++(0.2,0.1) 
to (v);
\draw[edg] (u) to ++(0.3,-0.1) to node[tinyA]{} ++(0.3,0) to ++(0.2,0.1) to ++(0.2,0.2) to ++(0.3,0.1) to ++(0.3,0.2) 
to ++(0,0.2) to (v);

\begin{scope}[shift={(2.5,0)}]
\node[vx] (u) at (0,0){};
\node[vx] (v) at (1.5,1){};
\draw[edg] (u) to ++(-0.1,0.3) to ++(0.2,0.2) to ++(0.1,0.3) to node[tinyA]{} ++(0.1,0.2) to ++(0.3,0.2) to ++(0.3,0.1) 
to ++(0.2,0) to (v);
\draw[edg,hellgrau] (u) to ++(0.2,0.2) to ++(0.3,0.1) to ++(0.2,0.2) to ++(0.1,0.3) to ++(0.2,0.1) 
to (v);
\draw[edg] (u) to ++(0.3,-0.1) to node[tinyA]{} ++(0.3,0) to ++(0.2,0.1) to ++(0.2,0.2) to ++(0.3,0.1) to ++(0.3,0.2) 
to ++(0,0.2) to (v);
\end{scope}

\begin{scope}[shift={(5,0)}]
\node[vx] (u) at (0,0){};
\node[vx] (v) at (1.5,1){};
\draw[edg] (u) to ++(-0.1,0.3) to ++(0.2,0.2) to ++(0.1,0.3) to node[tinyA]{} ++(0.1,0.2) to ++(0.3,0.2) to ++(0.3,0.1) 
to ++(0.2,0) to (v);
\draw[edg] (u) to ++(0.2,0.2) to ++(0.3,0.1) to ++(0.2,0.2) to ++(0.1,0.3) to ++(0.2,0.1) 
to (v);
\draw[edg,hellgrau] (u) to ++(0.3,-0.1) to node[tinyA]{} ++(0.3,0) to ++(0.2,0.1) to ++(0.2,0.2) to ++(0.3,0.1) to ++(0.3,0.2) 
to ++(0,0.2) to (v);
\end{scope}
\end{tikzpicture}
\caption{The theta graph in Lemma~\ref{thetalem}; vertices in $A$ in grey.}\label{thetafig}
\end{figure}

We need a very simple lemma that nevertheless is key to the proof. 
I contend that, in some sense, the reason why \emph{even} $A$-cycles
have the edge-Erd\H os-P\'osa property but not $A$-cycles of a length divisible
by~$3$, say, lies in the fact that theta graphs always contain an even cycle
but not necessarily one of a length divisible by~$3$.
\begin{lemma}\label{thetalem}
Let $\theta$ be a theta graph such that two of its subdivided edges meet 
a vertex set $A$. Then $\theta$ contains an even $A$-cycle.
\end{lemma}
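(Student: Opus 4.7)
The plan is to use a simple parity argument on the three cycles of the theta graph. Let $u, v$ denote the branch vertices of $\theta$, and let $P_1, P_2, P_3$ be its three subdivided edges, i.e.\ internally disjoint $u$--$v$ paths. I may assume (by relabelling) that $P_1$ and $P_2$ each contain a vertex of $A$. The three cycles of $\theta$ are $C_{ij} := P_i \cup P_j$ for $\{i,j\} \subset \{1,2,3\}$, and their lengths satisfy
\[
|C_{12}| + |C_{13}| + |C_{23}| = 2(|P_1| + |P_2| + |P_3|),
\]
which is even. Consequently at least one of the three cycles has even length.

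Next I would observe that whichever cycle turns out to be even, it must meet $A$. Indeed, if $C_{12}$ is even then it contains $P_1$ (and $P_2$), which meets $A$. Otherwise $|P_1|+|P_2|$ is odd, so $|P_1|$ and $|P_2|$ have different parities, which forces $|P_1|+|P_3|$ and $|P_2|+|P_3|$ to have different parities too; exactly one of $C_{13}, C_{23}$ is then even, and this cycle contains either $P_1$ or $P_2$, hence meets $A$. In each case we obtain an even $A$-cycle inside $\theta$, as required.

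There is no real obstacle here; the only subtlety is making sure that the even cycle guaranteed by the parity identity is actually one that picks up a vertex of $A$, which is why the case distinction above is needed. The whole argument is a few lines and uses nothing beyond the definition of a theta graph and the assumption that two of the three subdivided edges meet $A$.
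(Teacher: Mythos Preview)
Your proof is correct and follows the same idea as the paper's: form the three cycles $P_i\cup P_j$, use the parity identity to get an even one, and check it meets $A$. The paper avoids your case split by noting up front that each of $P_1\cup P_2$, $P_2\cup P_3$, $P_3\cup P_1$ already contains one of $P_1,P_2$ and is therefore an $A$-cycle, so whichever is even works immediately.
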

\begin{proof}
Let $P_1,P_2,P_3$ be the subdivided edges of $\theta$. Then each of $P_1\cup P_2$,
$P_2\cup P_3$ and $P_3\cup P_1$ is a an $A$-cycle, and at least one of these is even.
\end{proof}

A second observation shows that we can exploit the fact that 
the ordinary Erd\H os-P\'osa property is already known to hold for even $A$-cycles.
Here, we call the upper bound on a hitting set, the function $f$ of the definition 
of the (ordinary or edge-) Erd\H os-P\'osa property,  a \emph{bounding function}.
\begin{lemma}[Bruhn and Heinlein~\cite{BH18}]\label{singlehitlem}
Let $\mathcal F$ be a class of graphs that has the Erd\H os-P\'osa property
with bounding function~$g$. Let $h:\mathbb N\to\mathbb R$ be a function such that 
for every $k$ and for every graph $G$ that has a vertex $z$ such that $G-z$ does not contain any subgraph of $\mathcal F$
it holds that: either $G$ contains $k$ edge-disjoint subgraphs from $\mathcal F$
or there is an edge set $F$ of size $|F|\leq h(k)$ that meets every subgraph from $\mathcal F$.
Then $\mathcal F$ has the edge-Erd\H os-P\'osa property with bounding function $f=gh$.
\end{lemma}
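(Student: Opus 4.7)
My plan is to combine the ordinary Erd\H{o}s-P\'osa hypothesis with the single-hitting-vertex hypothesis via an induction on the size of a vertex hitting set. First I would apply the ordinary Erd\H{o}s-P\'osa property with bounding function $g$ to $G$ at parameter $k$: either $G$ contains $k$ vertex-disjoint (and hence edge-disjoint) $\mathcal{F}$-subgraphs, in which case I am done, or there is a vertex set $X$ with $|X|\leq g(k)$ meeting every $\mathcal{F}$-subgraph. It then suffices to prove the following auxiliary statement by induction on $r=|X|$: whenever $X$ is a vertex hitting set of size $r$ for $G$, either $G$ has $k$ edge-disjoint $\mathcal{F}$-subgraphs or $G$ admits an edge hitting set of size at most $r\cdot h(k)$. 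Specialising to $r\leq g(k)$ then delivers the claimed bound $f=gh$.

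For the inductive step I would pick $z\in X$, set $X':=X\setminus\{z\}$, and observe that in $G_z:=G-X'$ the vertex $z$ is a single hitting vertex, because $G_z-z=G-X$ contains no $\mathcal{F}$-subgraph. The hypothesis on $h$ then yields either $k$ edge-disjoint $\mathcal{F}$-subgraphs inside $G_z\subseteq G$, in which case I am done, or an edge set $F_z$ with $|F_z|\leq h(k)$ meeting every $\mathcal{F}$-subgraph of $G_z$. The crucial move is to then pass to $G^*:=G-F_z$ rather than to $G-z$: since $G^*-X'=G_z-F_z$ is $\mathcal{F}$-subgraph-free, the set $X'$ is a vertex hitting set of size $r-1$ for $G^*$, so the inductive hypothesis yields either $k$ edge-disjoint $\mathcal{F}$-subgraphs in $G^*\subseteq G$, or an edge set $F'$ with $|F'|\leq (r-1)h(k)$ hitting every $\mathcal{F}$-subgraph of $G^*$. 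Setting $F:=F_z\cup F'$ gives an edge set of size at most $rh(k)$; any $\mathcal{F}$-subgraph of $G$ either meets $F_z$, or, failing that, is contained in $G^*=G-F_z$ and hence must meet $F'$.

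The step calling for the most care is this final verification. The naive strategy of applying the single-hitting-vertex hypothesis to each $G-(X\setminus\{z\})$ independently and taking the union of the resulting edge sets fails to control any $\mathcal{F}$-subgraph of $G$ that simultaneously uses two or more vertices of $X$, since such a subgraph lies in none of the $G-(X\setminus\{z\})$. The device of first deleting $F_z$ from $G$ before recursing is precisely what fixes this problem: once $F_z$ is removed, the remainder has $X'$ as a strictly smaller vertex hitting set, and so the recursion can mop up everything else, including the multi-$X$-vertex subgraphs.
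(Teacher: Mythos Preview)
The paper does not actually prove this lemma; it is quoted from Bruhn and Heinlein~\cite{BH18} with only the remark that ``the lemma and indeed its proof still hold true for classes of graphs with labels, such as even $A$-cycles.'' So there is no in-paper proof to compare against.

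That said, your argument is correct and is the natural proof of the statement: obtain a vertex hitting set $X$ of size at most $g(k)$ from the ordinary Erd\H os-P\'osa property, then peel off the vertices of $X$ one at a time, each time invoking the single-vertex hypothesis and deleting the resulting edge set before continuing. You have also correctly identified and handled the one genuine pitfall, namely that applying the single-vertex hypothesis to the graphs $G-(X\setminus\{z\})$ in parallel and taking the union of the edge sets does not cover $\mathcal F$-subgraphs meeting $X$ in several vertices; deleting $F_z$ before recursing is exactly the fix. The base case $r=0$ is immediate, and the size bound $r\cdot h(k)\leq g(k)h(k)$ follows. Nothing is missing.
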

We note that, although not formulated in this way, the lemma and indeed its proof still hold true 
for classes of graphs with labels, such as even $A$-cycles.

We need a couple of more simple lemmas.
Let $a,b,c$ be three distinct vertices. An \emph{$a$--$b$--$c$~path} is a path 
that starts in $a$, passes through $b$ and ends in $c$.

\begin{lemma}\label{abclem}
Let $a,b,c$ be distinct vertices in a graph $G$. Then $G$ contains an $a$--$b$--$c$~path if and only 
if no single vertex, except for $b$, separates $b$ from $\{a,c\}$.
\end{lemma}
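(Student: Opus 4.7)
The plan is to prove the forward direction by direct inspection and the backward direction via a standard Menger-type auxiliary construction.

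For the forward direction, suppose $P$ is an $a$--$b$--$c$~path. Its two subpaths $aPb$ and $bPc$ are internally disjoint and share only the vertex $b$. Given any $v\neq b$, at most one of these two subpaths contains $v$, so the other survives in $G-v$ as a path from $b$ to a vertex of $\{a,c\}\setminus\{v\}$. Hence $v$ does not separate $b$ from $\{a,c\}$.

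For the backward direction I would introduce an auxiliary vertex $x$ joined to $a$ and $c$, obtaining a graph $G'$. The key claim is that no single vertex of $V(G')\setminus\{b,x\}$ separates $b$ from $x$ in $G'$. Indeed, suppose some such vertex $v$ did. If $v\notin\{a,c\}$, then $G-v$ would contain no path from $b$ to $\{a,c\}$ at all, contradicting the hypothesis; and if $v=a$ (respectively $v=c$), then $G-v$ would contain no path from $b$ to $c$ (respectively to $a$), again contradicting the hypothesis. Menger's theorem then furnishes two internally disjoint $b$-$x$~paths $P_1,P_2$ in $G'$. Since $x$ has exactly two neighbours, namely $a$ and $c$, and two internally disjoint paths cannot share an internal vertex, one of $P_1,P_2$ has penultimate vertex $a$ and the other has penultimate vertex $c$. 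Deleting $x$ from both and concatenating the two resulting $b$-paths at $b$ produces the desired $a$--$b$--$c$~path.

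The only slightly delicate point in this plan is the case distinction verifying the Menger hypothesis in $G'$, in particular handling the degenerate situations $v=a$ and $v=c$, where "$v$ separates $b$ from $\{a,c\}$" has to be interpreted as "no path from $b$ reaches $\{a,c\}\setminus\{v\}$ in $G-v$". Once that is carefully unpacked, the rest is a routine application of Menger's theorem.
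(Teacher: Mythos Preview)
Your proof is correct. Both your argument and the paper's rest on Menger's theorem, but the executions differ. The paper applies the fan version of Menger directly to obtain two internally disjoint $b$--$\{a,c\}$ paths $P_1,P_2$; if both happen to end at the same vertex, say $c$, it then takes a $b$--$a$ path $Q$ avoiding $c$ (available because $c$ does not separate $b$ from $\{a,c\}$) and reroutes by hand: with $z$ the first vertex of $P_1\cup P_2$ seen from $a$ along $Q$, the concatenation $aQzP_1bP_2c$ is the desired path. Your auxiliary-vertex trick sidesteps this rerouting entirely, since the degree-$2$ vertex $x$ forces the two Menger paths to exit through $a$ and $c$ respectively. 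The trade-off is that you must verify the Menger hypothesis in the modified graph $G'$, including the boundary cases $v\in\{a,c\}$ that you correctly flag; the paper's argument stays inside $G$ but pays for it with the explicit patching step. Both arguments are short and standard; yours is marginally slicker once the case analysis for the separator is done.
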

\begin{proof}
As no single vertex separates $b$ from $\{a,c\}$ there are two internally disjoint $b$--$\{a,c\}$~paths $P_1,P_2$. 
Clearly, we are done if $P_1$ and $P_2$ have different endvertices. Thus, we assume that $P_1$ and $P_2$ end in $c$.
Again, as no single vertex separates $b$ from $\{a,c\}$ there is a path $Q$ from $b$ to $a$ that avoids $c$. 
Viewed from $a$ let $z$ be the first vertex on $Q$ in $P_1\cup P_2$. Let us say that $z\in V(P_1)$.
 Then $aQzP_1bP_2c$ is an $a$--$b$--$c$~path.
\end{proof}

%\section{Tree lemmas}

The \emph{diameter} of a tree is the length of a longest path in the tree. 
\begin{lemma}\label{treesizelem}
Every tree with $s\geq 2$ leaves and diameter $d$ has at most $\tfrac{sd}{2}+1$  vertices.
\end{lemma}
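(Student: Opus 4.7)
The plan is a double counting over leaf pairs. Every pair $\{\ell,\ell'\}$ of leaves satisfies $d_T(\ell,\ell')\le d$, so summing over all $\binom{s}{2}$ leaf pairs yields
\[
\sum_{\{\ell,\ell'\}} d_T(\ell,\ell') \;\le\; \binom{s}{2}\,d \;=\; \frac{s(s-1)d}{2}.
\]

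For the other direction I would rewrite the same sum edge by edge. Deleting an edge $e\in E(T)$ splits $T$ into two subtrees; let $s_e$ and $s-s_e$ be the numbers of leaves of $T$ in each of them. Since the unique $\ell$--$\ell'$ path in $T$ passes through $e$ precisely when $\ell$ and $\ell'$ lie in different components of $T-e$,
\[
\sum_{\{\ell,\ell'\}} d_T(\ell,\ell') \;=\; \sum_{e\in E(T)} s_e(s-s_e).
\]

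The key (and only delicate) step is to check that each component of $T-e$ contains at least one leaf of $T$. If the component is a single vertex, that vertex has degree $1$ in $T$ and is therefore a leaf. Otherwise it is a tree on at least two vertices and thus has at least two leaves of its own, at most one of which is the endpoint of $e$; any other such leaf retains its degree in $T$ and hence is a leaf of $T$. Consequently $1\le s_e\le s-1$, so $s_e(s-s_e)\ge s-1$ for every edge. Combining the two displays gives $(s-1)|E(T)|\le s(s-1)d/2$, i.e.\ $|E(T)|\le sd/2$, and therefore $|V(T)|=|E(T)|+1\le sd/2+1$. I do not foresee any real obstacle: the whole argument is three lines once the "each side of $T-e$ contains a leaf" observation is in place.
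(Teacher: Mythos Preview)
Your proof is correct and takes a genuinely different route from the paper's. The paper argues by induction on the number of leaves: for $s=2$ the tree is a path on $d+1$ vertices; for $s\ge 3$ it picks a longest path~$P$, a leaf $\ell\notin V(P)$, and the branch $Q$ from $\ell$ up to the nearest vertex of degree $\ge 3$. Because $P$ is longest, $Q$ has length at most $d/2$, so deleting the interior of $Q$ and $\ell$ drops one leaf and at most $d/2$ vertices, and induction closes.

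Your double counting is more global and avoids induction entirely: bounding $\sum_{\{\ell,\ell'\}} d_T(\ell,\ell')$ above by $\binom{s}{2}d$ and below by $\sum_e s_e(s-s_e)\ge (s-1)|E(T)|$ gives $|E(T)|\le sd/2$ directly. The only point that needs care is exactly the one you isolate, that $1\le s_e\le s-1$ for every edge, and your argument for it is fine. Each approach has its virtue: the paper's peeling argument makes transparent \emph{where} the extra vertices sit (on short branches off a longest path), while your averaging argument is slicker and would adapt more readily to weighted or metric variants.
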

% note: this is tight if $d$ is even. Subdivide a star with $s$ leaves such that each
% edge gets length $d/2$. 
\begin{proof}
Let $T$ be a tree with $s$ leaves and diameter $d$. For $s=2$ the tree $T$ is a path of length $d$
and thus has $d+1=\tfrac{sd}{2}+1$ many vertices. For $s\geq 3$, pick a longest path $P$
and a leaf $\ell$ not on $P$, and let $Q$ be the path that starts in $\ell$, ends in 
a vertex $t$ of degree at least~$3$ and that has no internal vertex of degree~$3$ or more. 
In particular, except for possibly $t$, no vertex in $Q$ lies in the longest path $P$.
Moreover, we note that the length of $Q$ is at most $d/2$. 

Let $T'$ be the tree obtained from deleting all of $Q$ except for $t$. Then  $T'$
has $s-1$ leaves and at most $d/2$ fewer vertices than $T$. Induction on $s$ now finishes the proof.
\end{proof}

Let $T$ be a rooted tree with root $r$. For any vertex $v$ we denote by $T_v$ the subtree 
of $T$ on all vertices $w$ for which the $w$--$r$~path in $T$ passes through $v$.

\begin{lemma}\label{3leaflem}
Every tree $T$ with $\Delta(T)\geq 3$ and with $s$ leaves contains 
$\lfloor s/(2\Delta(T))\rfloor$
%\[
%\left\lfloor\frac{s}{2\Delta(T)-2}\right\rfloor
%\]
disjoint subtrees that each contain three leaves of $T$.
\end{lemma}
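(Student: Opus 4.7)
My plan is to root $T$ at any vertex of maximum degree, label each vertex $v$ of the rooted tree with the number $L(v)$ of leaves of $T$ contained in its rooted subtree $T_v$, and then extract three-leaf subtrees greedily, bottom up.

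Concretely, as long as the currently remaining tree contains some vertex $v$ with $L(v)\geq 3$, I would pick a \emph{deepest} such~$v$. Since every proper descendant $u$ of $v$ satisfies $L(u)\leq 2$, the subtree $T_v$ contains at least three and at most $2\Delta(T)$ leaves of $T$ (using that $v$ has at most $\Delta(T)$ children, each contributing at most~$2$ to $L(v)$). I would then form the subtree of $T_v$ obtained by joining $v$ to three leaves of $T$ in $T_v$ via paths in $T_v$, save it to my collection, and delete the entire subtree $T_v$ from the current tree, updating $L$ on the ancestors of $v$ accordingly. Because consecutive deletions are nested, the stored subtrees are pairwise disjoint, and each contains exactly three leaves of $T$ by construction.

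It remains to bound the number $N$ of iterations. Each iteration removes at most $2\Delta(T)$ leaves of $T$ from the picture, and upon termination every vertex $u$ of the remaining tree satisfies $L(u)\leq 2$, so at most two leaves of $T$ are still present. Hence $s\leq 2\Delta(T)\,N+2$.

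Writing $s=2\Delta(T)\,q+r$ with $0\leq r<2\Delta(T)$ and $q=\lfloor s/(2\Delta(T))\rfloor$, the bound rearranges to $N\geq q+(r-2)/(2\Delta(T))$. Since $\Delta(T)\geq 3$ we have $(r-2)/(2\Delta(T))>-1$, so the integrality of $N$ forces $N\geq q=\lfloor s/(2\Delta(T))\rfloor$, as required. I do not foresee a real obstacle; the only point that needs a little care is the estimate $L(v)\leq 2\Delta(T)$ at a deepest heavy vertex $v$, which is precisely why the greedy needs to be carried out bottom-up.
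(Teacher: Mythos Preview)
Your proof is correct and follows essentially the same approach as the paper: root the tree, repeatedly pick a deepest vertex whose subtree contains at least three original leaves, observe that such a subtree has at most $2\Delta(T)$ original leaves because each child contributes at most two, harvest a three-leaf subtree, delete, and iterate. The only cosmetic differences are that the paper phrases the iteration as an induction on the number of leaves (with a cleanup step pruning newly created non-original leaves, rather than your bookkeeping via $L$), and does not bother rooting at a vertex of maximum degree; your explicit arithmetic $s\leq 2\Delta(T)\,N+2\Rightarrow N\geq\lfloor s/(2\Delta(T))\rfloor$ replaces the paper's implicit inductive count.
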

\begin{proof}
Pick a root $r$ and consider $T$ as a rooted tree. In $T$ choose a vertex $v$
that is farthest from $r$ such that the subtree $T_v$ contains at least three leaves.
Then the components of $T_v-v$ each contain at most two leaves of $T$, and there are at most
$\Delta(T)$ such components. 
Therefore, $T_v$ contains at most $2\Delta(T)$ many 
leaves of $T$ (this is also true if $v$ is a leaf itself). 
We now delete $T_v$ from $T$, and then delete iteratively 
 each leaf that is not a leaf from $T$.  Then all leaves of the resulting tree $T'$ 
are leaves of $T$, and $T'$ has at least $s-2\Delta(T)$ many leaves. Induction 
on the number of leaves now yields the desired result.
\end{proof}

\begin{lemma}\label{alttreelem}
Let $\alpha,\beta$ and $\gamma$ be positive integers. 
For every tree $T$ on at least $\alpha\beta\gamma$ vertices  at least one of the
following statements is true:
\begin{enumerate}[\rm (i)]
\item\label{treea} $\Delta(T)\geq \alpha$; or
\item\label{treeb} $T$ contains $\beta$ disjoint subtrees that each contain three leaves of $T$; or
\item\label{treec} there is a path in $T$ of length at least $\gamma$.
\end{enumerate}
\end{lemma}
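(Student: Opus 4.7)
The plan is to prove the contrapositive: assume (i) and (iii) both fail, so $\Delta(T) \leq \alpha-1$ and the diameter $d$ of $T$ satisfies $d \leq \gamma - 1$, and then derive (ii). The two tools we need have just been proved: Lemma~\ref{treesizelem} converts small diameter into many leaves, and Lemma~\ref{3leaflem} converts many leaves into many disjoint 3-leaf subtrees.

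First, I would apply Lemma~\ref{treesizelem} to bound from below the number $s$ of leaves of $T$. Since $|V(T)| \leq s d/2 + 1$ with $d \leq \gamma - 1$, we get
\[
s \;\geq\; \frac{2(|V(T)| - 1)}{\gamma - 1} \;\geq\; \frac{2(\alpha\beta\gamma - 1)}{\gamma - 1}.
\]
Next, because $\Delta(T) \leq \alpha - 1$, Lemma~\ref{3leaflem} (assuming for the moment $\Delta(T) \geq 3$) furnishes at least
\[
\left\lfloor \frac{s}{2\Delta(T)} \right\rfloor \;\geq\; \left\lfloor \frac{s}{2(\alpha-1)} \right\rfloor \;\geq\; \left\lfloor \frac{\alpha\beta\gamma - 1}{(\alpha - 1)(\gamma - 1)} \right\rfloor
\]
disjoint subtrees of $T$ each containing three leaves of $T$.

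Finally, I would check that this floor is at least $\beta$. It suffices to verify $\alpha\beta\gamma - 1 \geq \beta(\alpha-1)(\gamma-1)$, and expanding the right-hand side gives $\alpha\beta\gamma - \beta(\alpha-1)(\gamma-1) = \beta(\alpha + \gamma - 1) \geq 1$, which holds for positive integers. This yields (ii), completing the contrapositive.

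The main obstacle is more bookkeeping than substance: one has to dispose of the degenerate cases in which the lemmas being cited do not literally apply. If $\Delta(T) \leq 2$ then $T$ is a path and $|V(T)| \leq d + 1 \leq \gamma$, which together with $|V(T)| \geq \alpha\beta\gamma$ forces $\alpha\beta \leq 1$, a regime where (i) or (iii) is automatic. Similarly, the use of Lemma~\ref{treesizelem} assumes $s \geq 2$, which holds as soon as $T$ is not a single vertex. Once these boundary cases are disposed of, the estimate above shows that $\alpha\beta\gamma$ is exactly the right order of magnitude for the desired conclusion.
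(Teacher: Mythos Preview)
Your argument is correct and is essentially the paper's own proof, just arranged slightly differently: the paper assumes all three of (i)--(iii) fail and uses Lemma~\ref{3leaflem} then Lemma~\ref{treesizelem} to conclude $|V(T)|<\alpha\beta\gamma$, whereas you assume (i) and (iii) fail and run the same two lemmas in the reverse order to extract (ii). The arithmetic is the same, and your treatment of the boundary case $\Delta(T)\leq 2$ is in fact more explicit than the paper's.
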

\begin{proof}
Assume $T$ to be a tree that satisfies neither \eqref{treea}, \eqref{treeb} nor \eqref{treec}.
We will bound the number $n$ of its vertices. 
By Lemma~\ref{3leaflem}, it follows for the number $s$ of leaves of $T$ that
\[
\left\lfloor\frac{s}{2\Delta(T)}\right\rfloor \leq \beta-1
\quad\Rightarrow\quad
\frac{s}{2\Delta(T)} \leq \beta,
\]
which implies $s\leq 2(\alpha-1)\beta\leq 2\alpha\beta-2$.

Lemma~\ref{treesizelem} yields
\(
n\leq\tfrac{1}{2}s(\gamma-1)+1< \alpha\beta\gamma.
\)
\end{proof}

The following lemma is a more general form of a lemma in~\cite{BHJ18}, which,
in turn, is the finite special case of a lemma of Thomassen~\cite{Tho16}. 
It can be proved with a simple greedy argument. 
\begin{lemma}\label{Asublem}
Let $s$ be a positive integer, and 
let $Z$ be a vertex set in a tree~$T$. Then $T$ contains $\lfloor |Z|/s\rfloor$ 
edge-disjoint subtrees that each contain at least $s$ vertices from $Z$. 
\end{lemma}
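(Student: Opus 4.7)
The plan is to use induction on $|V(T)|$. The base case $|Z|<s$ is immediate, so suppose $|Z|\ge s$. Root $T$ at an arbitrary vertex and let $v$ be a vertex at maximum distance from the root with $|V(T_v)\cap Z|\ge s$, where $T_v$ denotes the subtree of $T$ rooted at $v$. By the choice of $v$, every child $c$ of $v$ satisfies $|V(T_c)\cap Z|<s$.

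Next I extract a subtree $T_1\subseteq T_v$ containing $v$ with exactly $s$ vertices from $Z$. Such a $T_1$ can be built greedily from $\{v\}$ by repeatedly appending a single vertex adjacent to the current subtree: each addition changes the $Z$-count by $0$ or $1$, so the growth can be halted exactly when the count first reaches $s$. I arrange the growth so that it absorbs whole pendant subtrees $T_c$ of $v$ before partially using any child-subtree; since $|V(T_c)\cap Z|<s$ for every child $c$, absorbing a single $T_c$ never overshoots until at most one last partial step, and hence at most one child-subtree is ever split. I then delete $V(T_1)\setminus\{v\}$ from $T$. The resulting forest has one main component containing $v$ together with $V(T)\setminus V(T_v)$, plus possibly further components sitting inside $T_v$; each of these extra components lies inside a single $T_c$ and hence contains fewer than $s$ vertices of $Z$, so the induction hypothesis applied to them contributes nothing. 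Applied to the main component, which has at least $|Z|-s$ vertices of $Z$, the induction yields at least $\lfloor|Z|/s\rfloor-1$ further edge-disjoint subtrees each with $\ge s$ vertices from $Z$; edge-disjointness with $T_1$ is automatic because every edge of $T_1$ was removed together with $V(T_1)\setminus\{v\}$, so any subtree the induction produces in the main component can only use edges outside $T_1$. Combined with $T_1$, this gives $\lfloor|Z|/s\rfloor$ edge-disjoint subtrees, as claimed.

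The main obstacle will be the accounting: one must verify that the main component retains at least $|Z|-s$ vertices of $Z$ so that the inductive step indeed produces $\lfloor|Z|/s\rfloor-1$ further subtrees. The greedy rule above keeps the ``loss'' of $Z$-vertices to non-main components confined to the single partially-used child-subtree, which holds fewer than $s$ vertices of $Z$, and a short case distinction on whether any child-subtree had to be split at all closes the induction.
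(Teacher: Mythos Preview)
Your plan correctly isolates the crux, but the ``short case distinction'' you promise cannot close the induction, because the lemma is false for $s\ge 3$. Take $s=3$ and let $T$ be the tree on $\{v,c_1,c_2,c_3,a_{11},a_{12},a_{21},a_{22},a_{31},a_{32}\}$ with edges $vc_i$ ($i=1,2,3$) and $c_ia_{ij}$ ($j=1,2$); set $Z=\{a_{ij}:i,j\}$, so $|Z|=6$ and $\lfloor|Z|/s\rfloor=2$. Since each of the three branches carries only two $Z$-vertices, any subtree containing three $Z$-vertices must reach at least two branches and hence must use at least two of the three edges $vc_1,vc_2,vc_3$; two edge-disjoint such subtrees would therefore need four of those three edges. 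So no two edge-disjoint subtrees each meeting $Z$ in at least three vertices exist. In your argument this tree (rooted at $v$) is exactly the situation where $v$ is the deepest vertex with $|V(T_v)\cap Z|\ge s$, one child-subtree must be split, and the resulting extra component carries a $Z$-vertex that the main component cannot recover. The accounting fails not through a bookkeeping slip but because the claimed bound is unattainable.

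The paper gives no proof of this lemma beyond the remark that ``it can be proved with a simple greedy argument'', so there is nothing on the paper's side to compare with. For $s=2$ your approach does go through: each $T_c$ then carries at most one vertex of $Z$, whole-subtree absorption hits the target exactly, no child-subtree is ever split, and the main component retains at least $|Z|-2$ vertices of $Z$, so the induction closes. The paper applies the lemma only with $s=2$ and $s=3$; the first use is covered by your argument, while the second rests on a statement that is not correct as written.
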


\section{Proof of main result}

In the course of this section we will prove that for all positive integers $k$, 
every graph $G$ with a single vertex hitting set
either contains $k$ edge-disjoint even $A$-cycles or admits an edge set $F$ of size $|F|\leq 1080k^5$
that meets all even $A$-cycles. Lemma~\ref{singlehitlem}, together with Theorem~\ref{kkthm}, 
 then finishes the proof of Theorem~\ref{evenathm}.

We fix throughout this section a graph $G$, a vertex set $A\subseteq V(G)$, and an integer $k\geq 2$. 
For the proof we can make three assumptions:
\begin{equation}\label{singlez}
\emtext{There is a vertex $z$ that meets every even $A$-cycle of $G$}.
\end{equation}
\begin{equation}\label{nok}
\emtext{$G$ does not contain $k$ edge-disjoint even $A$-cycles}.
\end{equation}
\begin{equation}\label{G2con}
\emtext{$G$ is $2$-connected}.
\end{equation}
The last assumption is justified because every even $A$-cycle lies in a block of $G$.

\begin{lemma}
If $z\in A$ then there is an edge set $F$ of size $|F|\leq 4k$ that meets all even $A$-cycles 
of $G$
\end{lemma}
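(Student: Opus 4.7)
The plan is to split on $\deg_G(z)$. Observe first that, since $z \in A$ and $z$ hits every even $A$-cycle by \eqref{singlez}, the set of even $A$-cycles of $G$ coincides with the set of even cycles through $z$, and each such cycle uses exactly two edges at $z$. If $\deg_G(z) \leq 4k$, set $F := E_G(z)$: this edge set has size at most $4k$ and meets every even cycle through $z$, hence every even $A$-cycle.

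Assume from now on that $\deg_G(z) > 4k$. I aim to derive a contradiction with \eqref{nok} by producing $k$ pairwise edge-disjoint even cycles through $z$. The graph $G - z$ is connected by the $2$-connectivity of $G$ (assumption \eqref{G2con}), so it admits a spanning tree $T$; root $T$ arbitrarily and assign each $u \in N_G(z)$ the parity $\pi(u) \in \{0,1\}$ of its depth in $T$. Writing $N_G(z) = N_0 \cup N_1$ according to $\pi$, some class, say $N_0$, satisfies $|N_0| > 2k$. Apply Lemma~\ref{Asublem} to $T$ with $Z := N_0$ and $s := 2$: this produces $k$ pairwise edge-disjoint subtrees $T_1, \ldots, T_k$ of $T$, each containing at least two vertices of $N_0$. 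From each $T_j$ pick two distinct $u_j, v_j \in N_0 \cap V(T_j)$ and let $P_j \subseteq T_j$ be the tree path between them; because $\pi(u_j) = \pi(v_j) = 0$, the tree-path $P_j$ has even length, so $C_j := z\,u_j\,P_j\,v_j\,z$ is an even cycle through $z$, and hence an even $A$-cycle. The internal paths $P_j$ are pairwise edge-disjoint as subgraphs of edge-disjoint subtrees.

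The main obstacle is ensuring that the two edges $z u_j, z v_j$ used by each $C_j$ are all distinct across the indices $j$, so that the cycles are pairwise edge-disjoint in $G$ as a whole, not merely in $G - z$. A collision arises precisely when the same vertex of $N_0$ lies in two different subtrees $T_i, T_j$ and is chosen in both. Two edge-disjoint subtrees of a tree share at most one vertex, so each pair $T_i, T_j$ can cause at most one such collision, severely limiting the conflict pattern. I plan to resolve the conflicts in two ways, depending on the configuration: either a greedy re-selection of $u_j, v_j$ inside each $T_j$ succeeds (possibly after applying Lemma~\ref{Asublem} to the other parity class $N_1$, or with a slightly larger $s$, using the surplus afforded by $|N_0| > 2k$ in place of the tight bound $|N_0| \geq 2k$), or, in the stubborn cases, two colliding cycles $C_i, C_j$ sharing an edge $z u$ at $z$ are replaced by their symmetric difference: $C_i \triangle C_j$ is a single even cycle through $z$ that avoids $zu$ (since $P_i$ and $P_j$ both have even length and meet only at $u$, their concatenation is an even $v_i$-$v_j$ path in $G - z$), and a fresh subtree from the surplus guaranteed by $\lfloor |N_0|/2 \rfloor > k$ can be used to restore the count. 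Producing $k$ edge-disjoint even $A$-cycles contradicts \eqref{nok}, so this case is impossible, completing the proof.
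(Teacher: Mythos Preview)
Your overall plan mirrors the paper's: split on $\deg_G(z)$, and in the high-degree case find $k$ edge-disjoint even cycles through $z$ by taking a spanning tree of $G-z$, selecting neighbours of $z$ of a fixed depth-parity, and invoking Lemma~\ref{Asublem}. The difficulty you identify --- that two edge-disjoint subtrees of $T$ may share a vertex of $N_0$, so two of your cycles may reuse the same edge $zu$ --- is real, and your proposed resolutions do not close it. First, the ``surplus'' claim is false: from $\deg_G(z)>4k$ you only get $|N_0|\geq 2k+1$, hence $\lfloor |N_0|/2\rfloor = k$, so Lemma~\ref{Asublem} gives exactly $k$ subtrees with no spare. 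Second, the symmetric-difference fix can cascade: on a path $T$ with $N_0=\{v_0,v_2,\dots\}$ and subtrees $[v_{2i-2},v_{2i}]$, each consecutive pair collides, and after taking $C_1\triangle C_2$, $C_3\triangle C_4,\dots$ the new cycles still collide pairwise at $zv_4,zv_8,\dots$; iterating collapses everything to a single cycle. Third, ``slightly larger $s$'' cannot help with only $2k+1$ vertices in $N_0$: already $s=3$ yields fewer than $k$ subtrees.

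The missing device is the paper's subdivision trick: subdivide every edge at $z$ once to obtain $G'$, and work with the new neighbours $Z\subseteq N_{G'}(z)$ (the subdivision vertices). These have degree~$1$ in $G'-z$, so in any tree $T'\subseteq G'-z$ containing them they are leaves; two edge-disjoint paths in $T'$ therefore cannot share a vertex of $Z$ at all, and the collision problem evaporates. After that, the parity and Lemma~\ref{Asublem} steps go through exactly as you wrote, and undoing the subdivision gives $k$ edge-disjoint even cycles through $z$, contradicting~\eqref{nok}.
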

\begin{proof}
Suppose that $z$ has degree at least~$4k$. 
Let $G'$ be the graph arising from subdividing every edge incident with $z$ once.
By~\eqref{G2con}, $G$ is $2$-connected, and $G'$ therefore too. 
Thus, there is a tree $T'$ in $G'-z$ that contains all neighbours of $z$. Of these
at least $4k$ neighbours, at least $2k$ lie in the same partition class of the bipartite
graph $T'$; let the set of these neighbours be $Z$. Lemma~\ref{Asublem} yields a set $\mathcal P'$ 
of $k$
edge-disjoint $Z$-paths in $T'$, each of which has even length by choice of $Z$. 
Moreover, since the vertices in $Z$ have degree~$1$ in $G'$, no vertex in $Z$ lies in two
paths of $\mathcal P'$.
Now, by identifying the endvertices of the paths in $\mathcal P'$
with $z$, we obtain $k$ edge-disjoint even cycles in $G$ that each pass through $z$. 
As $z\in A$
we have thus found $k$ edge-disjoint even $A$-cycles, which is impossible by~\eqref{nok}.

Therefore $z$ has degree less than $4k$. By~\eqref{singlez},
the set $F$ of edges incident with $z$ meets all even $A$-cycles.
\end{proof}

Thus, from now on, we may assume that
\begin{equation}\label{znotA}
z\notin A.
\end{equation}

\begin{lemma}\label{blockdeglem}
Let $B$ be a block of $G-z$, and let $a\in A\cap V(B)$. Then $a$ has at most two neighbours in $B$.
\end{lemma}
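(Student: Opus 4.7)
The plan is to derive a contradiction by assuming that $a$ has three distinct neighbours $b_1,b_2,b_3$ in $B$. From these I will build a theta subgraph of $B$ with $a$ as a branch vertex, and then invoke Lemma~\ref{thetalem} to extract an even $A$-cycle contained entirely in $B\subseteq G-z$. Since such a cycle necessarily avoids $z$, this contradicts~\eqref{singlez}.

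Because $a$ has three neighbours in $B$, the block $B$ has at least four vertices and is $2$-connected; in particular $B-a$ is connected. I first pick a $b_1$--$b_2$ path $P$ in $B-a$, so that $C:=ab_1Pb_2a$ is a cycle in $B$ containing both edges $ab_1$ and $ab_2$. It remains to incorporate the edge $ab_3$ into a theta graph. If $b_3\in V(C)$, then $b_3\notin\{a,b_1,b_2\}$ means $b_3$ is not adjacent to $a$ on $C$, so both arcs of $C$ between $a$ and $b_3$ have length at least $2$, and $C\cup\{ab_3\}$ is already a theta graph with branch vertices $a$ and $b_3$. Otherwise $b_3\notin V(C)$; then the connectedness of $B-a$ supplies a shortest path $Q$ in $B-a$ from $b_3$ to $V(C)\setminus\{a\}$, ending at some vertex $v\ne a$ on $C$. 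The graph $C\cup Q\cup\{ab_3\}$ is then a theta with branch vertices $a$ and $v$: its three internally disjoint $a$-to-$v$ paths are the two arcs of $C$ from $a$ to $v$ and the path $ab_3Qv$.

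In either construction $a\in A$ is a branch vertex, so $a$ is an endpoint of each of the three subdivided edges, which therefore all meet $A$. Lemma~\ref{thetalem} now supplies an even $A$-cycle inside this theta subgraph of $B$; as $B$ is a block of $G-z$, the cycle lies in $G-z$ and thus avoids $z$, contradicting~\eqref{singlez} as required.

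There is no real obstacle to this argument: the $2$-connectedness of $B$ immediately gives a cycle $C$ through the edges $ab_1$ and $ab_2$, and the third neighbour $b_3$ is either already on $C$ or can be hooked onto it by a disjoint path in $B-a$, both scenarios yielding a theta with $a$ as a branch vertex. The even $A$-cycle then drops out of Lemma~\ref{thetalem}.
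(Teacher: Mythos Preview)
Your proof is correct and follows essentially the same approach as the paper: assume three neighbours, build a theta graph in $B$ with $a$ as a branch vertex, and apply Lemma~\ref{thetalem} to obtain an even $A$-cycle avoiding $z$, contradicting~\eqref{singlez}. The only cosmetic difference is that the paper avoids your case split by taking a smallest tree $T$ in $B-a$ containing $b_1,b_2,b_3$ and observing that $T$ together with $a$ and the edges $ab_1,ab_2,ab_3$ is already a theta graph.
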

\begin{proof}
Suppose that $a$ has three neighbours $b_1,b_2,b_3$ in $B$. Pick a smallest tree $T$ in $B-a$ that 
contains $b_1,b_2,b_3$. Then $T$ together with $a$ and the edges $ab_1,ab_2,ab_3$ forms a theta graph $\theta$
with branch vertex $a$. By Lemma~\ref{thetalem}, $\theta$ contains an even $A$-cycle that then is disjoint
from $z$, a contradiction to~\eqref{singlez}.
\end{proof}

For a subgraph $H$ of $G$,  an even cycle is \emph{$H$-heavy}
if it passes through an edge of $H$ that is incident with a vertex in $A$. 
Observe that, since $z\notin A$ by~\eqref{znotA},
every even $A$-cycle needs to be $B$-heavy 
for some block $B$ of $G-z$.

By~\eqref{G2con}, $G$ is $2$-connected, which means that
$G-z$ is connected and therefore admits a block-tree.
It also follows that
\begin{equation}\label{leafblock}
\emtext{
every leaf-block of the block-tree of $G-z$ contains a neighbour of $z$.
}
\end{equation}

\begin{lemma}\label{blockhitlem}
For every block $B$ of $G-z$ there is a set $F_B\subseteq E(G)$ of size $|F_B|\leq 12k$ 
that meets every $B$-heavy even cycle. 
\end{lemma}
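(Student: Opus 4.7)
The plan is to prove, by contradiction, that either the desired small hitting set $F_B$ exists, or $G$ contains $k$ edge-disjoint $B$-heavy even cycles, which would contradict~\eqref{nok}. The starting observation is that every $B$-heavy even cycle contains $z$ by~\eqref{singlez} and uses an edge of
\[
D_A := \{uv \in E(B) : \{u,v\} \cap A \neq \es\}.
\]
Lemma~\ref{blockdeglem} shows that each $a \in A \cap V(B)$ has at most two neighbours in $B$, so $|D_A| \leq 2|A\cap V(B)|$. If $B$ is a single edge the claim is trivial via $F_B := D_A$, so I would assume that $B$ is $2$-connected; then every $a \in A \cap V(B)$ has \emph{exactly} two neighbours in $B$.

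To localise the packing problem to the block, I would introduce the set of \emph{gateway vertices}
\[
S_B := (N(z) \cap V(B)) \cup \{c \in V(B) : c \text{ is a cutvertex of } G-z\},
\]
through which any cycle in $G$ traversing $B$ must enter and leave $B$. For each $c \in S_B$, I would record the set $\pi(c) \subseteq \{0,1\}$ of parities of walks from $c$ to $z$ in $G$ that use no edge of $B$; a direct neighbour of $z$ in $B$ automatically contributes an odd walk via its $z$-edge. Every $B$-heavy even cycle then decomposes into an \emph{inside path} in $B$ joining two gateways and passing through an edge of $D_A$, glued to an \emph{outside walk} through $z$, the two parities summing to even.

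The heart of the argument will be a packing step driven by Lemma~\ref{thetalem}. Using that $B$ is $2$-connected and contains many $A$-vertices (forced when $|D_A| > 12k$, since each $A$-vertex supplies at most two edges to $D_A$), I would extract many edge-disjoint theta subgraphs of $B$, each with two $A$-incident subdivided edges, each yielding an even $A$-cycle when combined with a suitable outside walk through $z$ respecting $\pi$. If this extraction can be carried out $k$ times, we contradict~\eqref{nok}; otherwise, the process stalls after cutting few edges, which yields the desired $F_B$ of size at most $12k$.

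The main obstacle is aligning parities: the inside theta-path and the outside walk must have parities summing to even, and one has to guarantee this simultaneously across $k$ edge-disjoint theta extractions. The $12k$ bound will presumably come from tracking at most two edges of $D_A$ at each of $O(k)$ ``critical'' $A$-vertices together with bounded contributions from gateway edges, doubled for the two parity classes at each gateway—this delicate parity bookkeeping and the need to exclude alternative theta completions is where I expect most of the technical work to lie.
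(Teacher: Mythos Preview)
Your plan has a genuine gap at its core. You propose to ``extract many edge-disjoint theta subgraphs of $B$, each with two $A$-incident subdivided edges''; but no such theta graph can exist inside $B$. Indeed, by Lemma~\ref{thetalem} any theta graph in $B$ with two $A$-incident subdivided edges already contains an even $A$-cycle, and that cycle lies in $G-z$, contradicting~\eqref{singlez}. So the packing step you outline cannot get off the ground, and the subsequent talk of ``combining with an outside walk through $z$ respecting $\pi$'' is moot: if the theta is in $B$, no outside walk is needed (and the contradiction is immediate); if it is not, you have not said how to build it.

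The paper exploits precisely this obstruction. Because no such theta lives in $B$, one shows that \emph{all} vertices of $A\cap V(B)$ lie on a single cycle $C\subseteq B$, and that $B$ decomposes into pieces $S_0,\dots,S_{\ell-1}$ attached along the arcs of $C$ between consecutive $A$-vertices $a_i$. The $B$-heavy even cycles can only enter or leave $B$ through an $S_i$ that contains a gateway (neighbour of $z$ or cutvertex), and the key count is that fewer than $3k$ of the $S_i$ contain a gateway --- otherwise one builds $k$ edge-disjoint theta graphs \emph{through $z$} (each from three consecutive $C$--$z$ paths together with the connecting arcs of $C$), and Lemma~\ref{thetalem} then yields the forbidden $k$ even $A$-cycles with no parity bookkeeping at all. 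The hitting set $F_B$ consists of the (at most two) $B$-edges at each of the at most $2\cdot 3k$ boundary $A$-vertices $a_{i_j},a_{i_j+1}$ of those special $S_{i_j}$, giving $|F_B|\le 12k$. Your parity map $\pi$ and the worry about aligning inside/outside parities never arise: building the theta so that $z$ is a branch vertex lets Lemma~\ref{thetalem} absorb all parity issues.
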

\begin{proof}
Assume first that $B$ contains at most two vertices from $A$. Then let $F_B$ be the set of edges of $B$
that are incident with vertices from $A$. By Lemma~\ref{blockdeglem}, we have $|F_B|\leq 4$. 
Clearly, $F_B$ meets every even $A$-cycle that is $B$-heavy. 

Thus, we assume now that $B$ contains at least three vertices from $A$.
Consider a cycle $C\subseteq B$ through at least one vertex of $A$. Then, every vertex in $A\cap V(B)$
lies in $C$; otherwise we could find a $C$-path $P$ in $B$ that passes through some vertex $a'\in V(B-C)$,
which would imply with Lemma~\ref{thetalem}
that the theta graph $C\cup P$ contained an even $A$-cycle that avoids  $z$, which is impossible by~\eqref{singlez}.

\begin{figure}[ht]
\centering
\begin{tikzpicture}[scale=0.7]

\def\xrad{0.4}
\def\yrad{0.3}
\def\crad{0.4}
\def\step{0.3}

\newcommand{\Si}[1]{
\begin{scope}[shift={#1}]
\draw[line width=2pt,dunkelgrau,fill=grauish] (0,0) ellipse [x radius=\xrad, y radius=\yrad];
\end{scope}
}

\draw[hellgrau,fill=hellgrau] ((-3*\xrad-4*\step-0.3,\yrad+0.05) rectangle (9*\xrad+10*\step+0.3,-1.5);
\node at (9*\xrad+10*\step,-1.2) {$B$};

\Si{(0,0)}
\Si{(2*\xrad+2*\step,0)}
\Si{(4*\xrad+4*\step,0)}
\Si{(6*\xrad+6*\step,0)}
\Si{(8*\xrad+8*\step,0)}
\Si{(-2*\xrad-2*\step,0)}

\foreach \i in {3,4,...,8}{
  \node at (2*\i*\xrad+2*\i*\step-8*\xrad-8*\step,-0.7) {$S_{\i}$};
}

\foreach \i in {4,...,8}{
  \node at (2*\i*\xrad+2*\i*\step-9*\xrad-9*\step,-0.4) {$a_{\i}$};
}

\block{(0,\yrad+\crad)}
\block{(0,\yrad+3*\crad)}
\block{(0,\yrad+5*\crad)}

\block{(6*\xrad+6*\step,\yrad+\crad)}
\block{(6*\xrad+6*\step,\yrad+3*\crad)}

\node[tinyvx] (aa) at (-3*\xrad-3*\step,0){};
\node[tinyvx] (a0) at (-\xrad-\step,0){};
\node[tinyvx] (a1) at (\xrad+\step,0){};
\node[tinyvx] (a2) at (3*\xrad+3*\step,0){};
\node[tinyvx] (a3) at (5*\xrad+5*\step,0){};
\node[tinyvx] (a4) at (7*\xrad+7*\step,0){};
\node[tinyvx] (a5) at (9*\xrad+9*\step,0){};

\draw[edg] (aa) -- (a0) -- (a1) -- (a2) -- (a3) -- (a4) -- (a5);
\draw[edg] (aa) edge ++(-\step,0);
\draw[edg] (a5) edge ++(\step,0);

\node[vx,label=right:$z$] (z) at (3,3.5){};
\draw[edg,bend right=20] (z) to node[auto,swap,near end] {$Q_4$} (0.1,\yrad+5*\crad+0.1) node[tinyvx] (z1){};
\draw[edg,bend left=10] (z) to node[auto,near end] {$Q_7$} (6*\xrad+6*\step+0.1,\yrad+3*\crad+0.1) node[tinyvx] (z2){};
\draw[edg,bend right=15] (z) to node[auto,midway] {$Q_5$} (2*\xrad+2*\step,0) node[tinyvx] (z3){};

\draw[patedg] (z1) to (0,\yrad+4*\crad) to (0,\yrad+2*\crad) to (0,\yrad) to (0,0);
\draw[patedg] (z2) to (6*\xrad+6*\step,\yrad+2*\crad) to (6*\xrad+6*\step,\yrad) to (6*\xrad+6*\step,0);

\node[align=center] (bl) at (-2,3.5) {blocks\\of $G-z$};
\draw[pointer,rounded corners=2pt] (bl) to (-1.5,\yrad+5*\crad) to (-\crad,\yrad+5*\crad);
\draw[pointer,rounded corners=2pt] (bl) to (-1.5,\yrad+3*\crad) to (-\crad,\yrad+3*\crad);
\draw[pointer,rounded corners=2pt] (bl) to (-1.5,\yrad+1*\crad) to (-\crad,\yrad+1*\crad);

\node[tinyvx] (q1) at (0,0){};
\node[tinyvx] (q3) at (6*\xrad+6*\step,0){};

\node[inner sep=2pt] (Q3) at (6*\xrad+6*\step+1,1){$q_7$};
\draw[pointer,bend left=10,black] (Q3) to (q3);

\end{tikzpicture}
\caption{A theta graph in the proof of Lemma~\ref{blockhitlem}}\label{blockfig}
\end{figure}

Let $a_0,a_1,\ldots, a_{\ell-1}$ be an enumeration of $A\cap V(B)$ in the order in which the vertices appear on $C$.
Note that $\ell\geq 2$. Then, $C$ splits into edge-disjoint subpaths $P_0,\ldots,P_{\ell-1}$ such that $P_i$
has endvertices $a_i$ and $a_{i+1}$, where indices are taken mod~$\ell$.
For $i=0,\ldots, \ell-1$ 
define $S_i$ to be the union of $P_i$ with all $C$-paths with both endvertices in $P_i$. 

We prove:
\begin{enumerate}[\rm (i)]
\item\label{bluba} $B=\bigcup_{i=0}^{\ell-1}S_i$;
\item\label{blubb} $S_i\cap S_j=\emptyset$ whenever $i+1<j$; and
\item\label{blubc} $S_i\cap S_{i+1}=\{a_{i+1}\}$ for every $i=0,\ldots, \ell-1$ (indices taken mod~$\ell$).
\end{enumerate} 
Indeed, for~\eqref{bluba} observe that no $C$-path in $B$ may meet $a_0,\ldots, a_{\ell-1}$ as these 
vertices have degree~$\leq 2$ in $B$, by Lemma~\ref{blockdeglem}, and each of the vertices
has already degree~$2$ in $C$. Thus, any $C$-path $Q$ with its endvertices not both in some $P_i$
has its endvertices in the interior of two distinct $P_i,P_j$ with $i<j$. Then $C\cup Q$
is a theta graph such that two of its subdivided  edges meet one of $a_i,a_{i+1}$. 
Lemma~\ref{thetalem} yields again an even $A$-cycle that avoids $z$, which is impossible by~\eqref{singlez}.

For~\eqref{blubb} and~\eqref{blubc}, suppose there were $i<j$ and a $C$-path $Q_i$ with its endvertices in $P_i$, 
and a $C$-path $Q_j$ with its endvertices in $P_j$ such that $Q_i$ and $Q_j$ meet, in a vertex $v$ say. 
Then $v$ must be an interior vertex of $Q_i$ (and of $Q_j$). Thus, $Q_ivQ_j$ is a $C$-path with its endvertices
in distinct paths of $P_0,\ldots, P_{\ell-1}$, which is impossible by~\eqref{bluba}.

Next:
\begin{equation}\label{fewcut}
\begin{minipage}[c]{0.8\textwidth}\em
Fewer than $3k$ of the graphs  $S_{0}-a_{1},\ldots, S_{\ell-1}-a_{\ell}$ contain  
a cutvertex of $G-z$ or a neighbour of $z$.
\end{minipage}\ignorespacesafterend 
\end{equation} 
Suppose that for $i_0<\ldots <i_{3k-1}$ each of $S_{i_j}-a_{i_j+1}$ contains a cutvertex of $G-z$
or a neighbour of $z$. In each case, we find, by~\eqref{leafblock}, a $C$--$z$~path $Q_{i_j}$ 
that starts in $P_{i_j}$ and meets $B$ only in $S_{i_j}$. In particular, the paths $Q_{i_j}$ pairwise
meet only in $z$. Let $q_{i_j}$ be the endvertex of $Q_{i_j}$ on $C$, and denote by $R_{i_j}$ 
the $q_{i_j}$--$q_{i_{j+1}}$~subpath of $C$ that contains $a_{i_j+1}$. Then for $t=0,\ldots, 3k-1$
\[
\theta_t=Q_{i_{3t}}\cup R_{i_{3t}} \cup Q_{i_{3t+1}} \cup R_{i_{3t+1}}\cup Q_{i_{3t+2}}
\]
is a theta graph such that two of its subdivided edges, namely $Q_{i_{3t}}\cup R_{i_{3t}}$
and $R_{i_{3t+1}}\cup Q_{i_{3t+2}}$, meet $A$ (in $a_{i_{3t}+1}$, resp.\ in $a_{i_{3t+1}+1}$). 
Thus, $\theta_0,\ldots,\theta_{k-1}$
are edge-disjoint graphs that each, by Lemma~\ref{thetalem}, contain an even $A$-cycle
in contradiction of~\eqref{nok}. This proves~\eqref{fewcut}.

Let $i_0<\ldots <i_s$ be the sequence of all indices $i_j$ such that $S_{i_j}-a_{i_j+1}$
contains a cutvertex of $G-z$ or a neighbour of $z$; and let $F_B$ be the set of edges
of $B$ 
incident with $\{a_{i_j},a_{i_j+1}:j=0,\ldots, s\}$. Then, as each $a_{i_j}$ has degree~$2$
in $B$, it follows from~\eqref{fewcut} that $|F_B|\leq 2\cdot 2\cdot 3k$. 

Consider a  $B$-heavy even $A$-cycle $D$. Then, 
$D$ contains an edge $ab\in E(B)$ such that $a\in A$. 
As $D$ passes through $z$,
by~\eqref{singlez}, it also contains a subpath $R$ that is completely contained in $B$,
that starts in  a cutvertex of $G-z$ or in a neighbour of $z$ in $B$
and that contains $ab$.
Let the endvertex $r\neq a$ of $R$ be in $S_{i_j}$. Now either $a\in \{a_{i_j},a_{i_j+1}\}$, in which case it holds
that $ab\in F_B$, or $a$ lies outside $S_{i_j}$. Then $R$ needs to pass through one of $\{a_{i_j},a_{i_j+1}\}$
and thus through an edge in $F_B$. In every case, $D$ meets $F_B$.
\end{proof}

Consider a
 path $S=b_0B_1\ldots B_{\ell}b_{\ell}$ in the block-tree of $G-z$, 
where $B_1,\ldots,B_\ell$ are blocks and $b_0,\ldots, b_\ell$ are cutvertices of $G-z$.
The path $S$ is a \emph{string} if
\begin{itemize}
\item $z$ does not have any neighbour in $\bigcup_{i=1}^\ell B_i$;
\item every $B_i$ and every $b_j$ has degree~$2$ in the block-tree;  
\item $B_1$ and $B_\ell$ contain a vertex from $A$; and 
\item the path $S$ is maximal subject to the other conditions. 
\end{itemize}

I will sometimes treat a string $S$ also as a subgraph of $G-z$, that is, 
I will speak of a string $S=b_0B_1\ldots B_{\ell}b_{\ell}$ but mean 
the subgraph $\bigcup_{i=1}^\ell B_i$. In particular, when I consider an even $A$-cycle 
and it is $S$-heavy, I will mean that the cycle is 
$\bigcup_{i=1}^\ell B_i$-heavy. 

\begin{lemma}\label{stringlem}
For every string $S$ there is an edge set $F_S\subseteq E(G)$ with $|F_S|\leq 24k^2$
such that $F_S$ meets every $S$-heavy even $A$-cycle. 
\end{lemma}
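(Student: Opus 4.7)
My plan is to reduce Lemma~\ref{stringlem} to Lemma~\ref{blockhitlem}. Define $\mathcal{B}_A := \{B_i : 1 \le i \le \ell,\ V(B_i) \cap A \neq \emptyset\}$, the set of blocks of the string that meet $A$. Every edge of $S$ incident to an $A$-vertex lies in a block of $\mathcal{B}_A$, because its $A$-endpoint lies in that block. Consequently, every $S$-heavy even $A$-cycle is $B$-heavy for some $B \in \mathcal{B}_A$, so the edge set
\[
F_S := \bigcup_{B \in \mathcal{B}_A} F_B,
\]
with $F_B$ from Lemma~\ref{blockhitlem}, meets every $S$-heavy even $A$-cycle and has size at most $12k\cdot|\mathcal{B}_A|$. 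Hence the proof reduces to showing $|\mathcal{B}_A|\le 2k$.

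I would prove the bound by contradiction: assume $|\mathcal{B}_A|\ge 2k+1$ and pick $A$-vertices $a_1,\dots,a_{2k+1}$ in distinct blocks $B_{i_1},\dots,B_{i_{2k+1}}$ indexed so that $i_1<\dots<i_{2k+1}$. The goal is to exhibit $k$ pairwise edge-disjoint even $A$-cycles in $G$, contradicting~\eqref{nok}. Using that $G$ is 2-connected by~\eqref{G2con} and that $z$ has no neighbour in $S$, I first fix an external $b_0$--$b_\ell$ path through $z$ that avoids the interior of $S$; such a path exists because $b_0,b_\ell$ are the only cutvertices of $G-z$ separating $S$ from the rest. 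Inside each $A$-block $B_{i_j}$, the 2-connectedness of $B_{i_j}$ together with Lemma~\ref{abclem} yields two internally disjoint $b_{i_j-1}$--$b_{i_j}$ paths, one through $a_j$ and one avoiding $a_j$.

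The intended construction is to pair the $A$-blocks as $(B_{i_{2m-1}},B_{i_{2m}})$ for $m=1,\dots,k$ and, for each pair, form a theta graph in $G$ whose two ``through-string'' subdivided edges each meet $A$ at one of the paired $a_j$'s and whose third subdivided edge uses the external route around $z$. Lemma~\ref{thetalem} then produces an even $A$-cycle through $z$ from each theta. The main obstacle is arranging these $k$ cycles to be pairwise edge-disjoint: all cycles must pass through $z$ and through $b_0,b_\ell$, so the external link cannot simply be reused across the $k$ cycles. I would overcome this by exploiting the parity rigidity forced by~\eqref{singlez}: no cycle inside a block of $G-z$ is an even $A$-cycle, and hence the two $b_{i_j-1}$--$b_{i_j}$ paths in $B_{i_j}$ through and avoiding $a_j$ must have opposite parities. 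Combined with the abundance of $A$-blocks, this parity rigidity should allow swapping ``through'' versus ``avoiding'' choices in pairs of $A$-blocks to build $k$ edge-disjoint $b_0$--$b_\ell$ through-string paths of the required parity, each matched to a distinct external route around $z$. Executing this exchange cleanly is the step I expect to be the hardest.
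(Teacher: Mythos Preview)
Your reduction to Lemma~\ref{blockhitlem} is exactly the paper's first move, but the claim you aim for, $|\mathcal{B}_A|\le 2k$, is simply false under the standing hypotheses~\eqref{singlez}--\eqref{znotA}. Take $G-z$ to be a chain of $\ell$ triangles $B_i$ on $\{b_{i-1},b_i,a_i\}$ with $A=\{a_1,\dots,a_\ell\}$, attach pendant edges $b_0c_0$ and $b_\ell c_\ell$, and let $z$ be adjacent only to $c_0$ and $c_\ell$. Then $G$ is $2$-connected, $G-z$ has no even cycle at all (so~\eqref{singlez} holds), every even $A$-cycle uses the edge $zc_0$ (so~\eqref{nok} holds for any $k\ge 2$), yet $S=b_0B_1\ldots B_\ell b_\ell$ is a string with $|\mathcal{B}_A|=\ell$ arbitrarily large. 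Your proposed contradiction cannot be reached here: there is exactly one external $b_0$--$b_\ell$ route through $z$, so you will never find two edge-disjoint even $A$-cycles, let alone $k$. The parity-swapping you sketch happens inside $S$ and does nothing about the bottleneck outside $S$; the phrase ``each matched to a distinct external route around $z$'' hides an assumption that is not justified and, in general, not true.

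The paper handles precisely this obstruction by a case split you are missing. If fewer than $2k$ blocks of $S$ meet $A$, your union-of-$F_B$ argument gives the bound. Otherwise the paper first checks whether some edge set of size at most $10k$ separates $z$ from $b_0$ in $G-(S-b_0)$, or $z$ from $b_\ell$ in $G-(S-b_\ell)$, or $b_0$ from $b_\ell$ in $S$; any such set already meets every $S$-heavy even $A$-cycle, and that is what happens in the example above. Only when all three separations fail does Menger give $10k$ edge-disjoint $z$-cycles through the whole string, and then one can reroute enough of them through distinct $a_{i_j}$ (using Lemma~\ref{abclem} and the degree bound of Lemma~\ref{blockdeglem}) to assemble $k$ edge-disjoint theta graphs and contradict~\eqref{nok}. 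So the missing ingredient is not a cleaner parity exchange but the Menger/separator dichotomy that guarantees many edge-disjoint external routes before you start building thetas.
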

\begin{proof}
Let $S=b_0B_1\ldots B_{\ell}b_{\ell}$.
First assume that fewer than $2k$ of the blocks $B_i$ contain a vertex from $A$.
For each such block $B_i$ that meets $A$, let $F_{B_i}$ be as in Lemma~\ref{blockhitlem},
i.e.\ an edge set of at most $12k$ edges that meets every $B_i$-heavy even $A$-cycle. 
For a block $B_i$ that is disjoint from $A$ put $F_{B_i}=\emptyset$. Then 
\[
F_S=\bigcup_{i=1}^\ell F_{B_i}
\]
meets every $S$-heavy even $A$-cycle. Moreover, it has size $|F_S|\leq 2k\cdot 12k=24k^2$.

Thus, assume that at least $2k$ of the blocks $B_i$ contain a vertex from $A$. 
Next, if possible, choose an $F_S\subseteq E(G)$ of size $|F_S|\leq 10k$ such that
\begin{itemize}
\item $F_S$ separates $z$ from $b_0$ in $G-(S-b_0)$; or 
\item $F_S$ separates $z$ from $b_\ell$ in $G-(S-b_\ell)$; or
\item $F_S$ separates $b_0$ from $b_\ell$ in $S$.
\end{itemize}
As any $S$-heavy even $A$-cycle  would pass through any such separator,
we would be done if there was such an $F_S$. Thus, we may assume that is not the case. 
Then, in particular, there is a set $\mathcal C$ of  $10k$ 
edge-disjoint cycles that pass through $z$, through each of $b_0,b_1,\ldots, b_\ell$
and through an edge from each of $B_1,\ldots, B_\ell$.

Pick indices $i_1<\ldots <i_{2k}$ such that the blocks $B_{i_j}$ each contain 
a vertex $a_{i_j}\in A$. 
%Observe that none of $a_{i_1},\ldots,a_{i_{2k}}$
%can be a cutvertex of $G-z$ as, by Lemma~\ref{blockdeglem}, such a vertex would \mymargin{cutvx - need?}
%have degree at most~$2$ in its blocks (which would mean that $b_0$ could be separated from $b_\ell$
%by at most two edges).
%
Next, we note that,  because, by Lemma~\ref{blockdeglem}, the vertices $a_{i_j}$ have degree at most (in fact, 
exactly)~$2$ in $B_{i_j}$,
\begin{equation}\label{onlyone}
\emtext{
 each $a_{i_j}$ may lie in at most one of the cycles in $\mathcal C$.
% we need this for the theta later on -- the second vx in A should lie outside the cycle to form a theta
}
\end{equation}

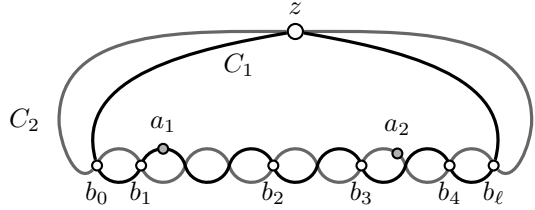
\begin{figure}[ht]
\centering
\begin{tikzpicture}
\draw[edg,dunkelgrau] (0,0) arc [start angle=165, end angle=15, radius=0.3]
arc [start angle=195, end angle=360-15, radius=0.3]
arc [start angle=165, end angle=15, radius=0.3]
arc [start angle=195, end angle=360-15, radius=0.3]
arc [start angle=165, end angle=15, radius=0.3]
arc [start angle=195, end angle=360-15, radius=0.3]
arc [start angle=165, end angle=15, radius=0.3]
node[tinyA,near end] (a2){}
arc [start angle=195, end angle=360-15, radius=0.3]
arc [start angle=165, end angle=15, radius=0.3]
;

\draw[edg] (0,0) 
node[tinyvx,label=below:$b_0$](b1){} 
arc [start angle=195, end angle=360-15, radius=0.3]
node[tinyvx,label=below:$b_1$]{}
arc [start angle=165, end angle=15, radius=0.3]
node[tinyA,midway,label=above:$a_1$]{}
arc [start angle=195, end angle=360-15, radius=0.3]
arc [start angle=165, end angle=15, radius=0.3]
node[tinyvx,label=below:$b_2$]{}
arc [start angle=195, end angle=360-15, radius=0.3]
coordinate[midway] (zz)
arc [start angle=165, end angle=15, radius=0.3]
node[tinyvx,label=below:$b_3$]{}
arc [start angle=195, end angle=360-15, radius=0.3]
arc [start angle=165, end angle=15, radius=0.3]
node[tinyvx,label=below:$b_4$]{}
arc [start angle=195, end angle=360-15, radius=0.3]
node[tinyvx,label=below:$b_\ell$](bl){}
;

\path (zz) to ++(0,2) node[vx,label=above:$z$] (z){};
\path (b1) to ++(-0.5,0.6) coordinate (w1);
\path (bl) to ++(0.5,0.6) coordinate (wl);
\draw[edg,dunkelgrau] (b1) to [out=225,in=-90] (w1) to [out=90,in=180] (z);
\draw[edg,dunkelgrau] (bl) to [out=-45,in=-90] (wl) to [out=90,in=0] (z);
\draw[edg,out=105,in=190] (b1) to node[near end, auto,swap,inner sep=2pt] {$C_1$}(z);
\draw[edg,out=90-15,in=-10] (bl) to (z);

\node[label=left:$C_2$] at (w1) {};
\node[label=above:$a_2$] at (a2) {};

\end{tikzpicture}
\caption{$C_1\cup C_2$ as in the proof of Lemma~\ref{stringlem} contains a theta graph}\label{twocycfig}
\end{figure}

Next, consider an $a_{i_j}$ that does not lie in any cycle of $\mathcal C$. 
Then there is a path $P\subseteq B_{i_j}$ through $a_{i_j}$ that starts in  in some cycle
$C\in\mathcal C$, ends in some (possibly different) cycle $C'\in\mathcal C$ and is otherwise
edge-disjoint from all cycles in~$\mathcal C$.
With Lemma~\ref{abclem} we see that there is a $b_{{i_j}-1}$--$a_{i_j}$--$b_{i_j}$~path
in $C\cup C'\cup P$. Using this path to replace $C\cap B_{i_j}$ in $C$, we obtain  
a new cycle $\tilde C$ that passes through $a_{i_j}$ and that is edge-disjoint from 
every cycle in $\mathcal C$, except for $C$ and $C'$. 
If $C'$ is disjoint from $a_{i_1},\ldots, a_{i_{2k}}$ then remove $C,C'$ from $\mathcal C$ and
add $\tilde C$.
If $C'$ already meets one 
of $a_{i_1},\ldots, a_{i_{2k}}$ then  choose some cycle $D\in\mathcal C$ that is disjoint from 
$a_{i_1},\ldots, a_{i_{2k}}$ (this is possible, by~\eqref{onlyone} and $|\mathcal C|\geq 10k$), and 
use $D\cap B_{i_j}$ to replace $C'\cap B_{i_j}$ in $C'$. We add the resulting cycle to $\mathcal C$
as well as $\tilde C$, and remove $C,C'$ and $D$ from $\mathcal C$. 
In both cases, the new set $\mathcal C$ will consist of pairwise edge-disjoint cycles of a size one less
than before. Moreover, one more of $a_{i_1},\ldots, a_{i_{2k}}$ will lie in a cycle of $\mathcal C$.
By repeating this process, we will find a set $C_1,\ldots, C_{2k}$ of  pairwise disjoint cycles 
each passing through $z$
such that each of $a_{i_1},\ldots, a_{i_{2k}}$ lies in exactly one cycle of $\mathcal C$ 
(we use~\eqref{onlyone} here).

Then, however, each of $C_1\cup C_2$, $C_3\cup C_4$, $\ldots,$ $C_{2k-1}\cup C_{2k}$ contains 
a theta graph such that in each at least two subdivided edges meet $A$; see Figure~\ref{twocycfig}. Again, we find with 
Lemma~\ref{thetalem} $k$ edge-disjoint even $A$-cycles, which is impossible by~\eqref{nok}.
\end{proof}

\newcommand{\nots}{\mathcal A_{\overline{\mathcal S}}}

Let $\mathcal S$ be the set of strings. 
Let $\nots$ be the set of blocks of $G-z$
that meet $A$ and that are not contained in any string. We note that:
\begin{equation}\label{Yheavy}
\emtext{
every even $A$-cycle is $Y$-heavy for some $Y\in\mathcal S\cup \nots$. 
}
\end{equation}
Indeed, every $A$-cycle $C$ is $B$-heavy for some block $B$ of $G-z$. 
If $B\notin \nots$ then $B$ lies in some string $S$, which implies
that $C$ is $S$-heavy.

For each string $S\in\mathcal S$, pick a block in it 
that meets $A$ and denote by $\mathcal A_{\mathcal S}$ the set of 
all the chosen blocks. Note that 
\[
|\mathcal S|=|\mathcal A_{\mathcal S}|
\]
as strings are disjoint.

Define $\mathcal T'$ to be the tree obtained from the block-tree of $G-z$ by 
iteratively deleting all leaves that are not in $\mathcal A_{\mathcal S}\cup\nots$ and by iteratively suppressing
vertices of degree~$2$ that are not in $\mathcal A_{\mathcal S}\cup\nots$. 
Clearly, $\mathcal T'$ contains all of $\mathcal A_{\mathcal S}\cup\nots$, and every leaf of $\mathcal T'$
and every vertex of degree~$2$ lies in $\mathcal A_{\mathcal S}\cup\nots$. 

We will now go through the different outcomes of Lemma~\ref{alttreelem}.

\begin{lemma}\label{clawlem}
$\mathcal T'$ does not contain $k$ disjoint subtrees that each contain three leaves of $\mathcal T'$.
\end{lemma}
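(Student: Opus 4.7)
The plan is to argue by contradiction: suppose $\mathcal{T}'$ contains $k$ pairwise vertex-disjoint subtrees $T_1,\ldots,T_k$, each containing three leaves of $\mathcal{T}'$; I will construct from them $k$ pairwise edge-disjoint even $A$-cycles, contradicting~\eqref{nok}. By minimising each $T_i$ I may assume it is a subdivided~$Y$ whose three leaves $B_1^i,B_2^i,B_3^i$ are the prescribed leaves of $\mathcal{T}'$---so each $B_j^i\in\mathcal{A}_{\mathcal{S}}\cup\nots$ contains some $a_j^i\in A$---and whose three branches meet at a common center $c_i\in V(\mathcal{T}')$ through cutvertices $v_j^i$ of $G-z$ (the attachments of each $B_j^i$ towards the rest of $T_i$).

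For each leaf $B_j^i$ I next exhibit an ``escape vertex'' $w_j^i\in V(B_j^i)$ with $w_j^i\neq v_j^i$ together with a path $R_j^i$ from $z$ to $w_j^i$ lying outside the expansion of $T_i$ in $G-z$. If $B_j^i$ is itself a leaf of the block-tree of $G-z$, then~\eqref{leafblock} supplies a $z$-neighbour $w_j^i$ inside $B_j^i$, and this neighbour cannot equal $v_j^i$, for otherwise $v_j^i$ would be a cut-vertex of $G$, contradicting \eqref{G2con}. Otherwise $B_j^i$ has at least one block-tree direction that was pruned in forming $\mathcal{T}'$; each leaf-block of such a pruned subtree has a $z$-neighbour by~\eqref{leafblock}, so I pick $w_j^i$ as the cutvertex of $B_j^i$ attaching this pruned branch and let $R_j^i$ follow the branch back to~$z$.

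Now I build the theta graphs $\theta_i$. Inside each block $B_j^i$, Lemma~\ref{abclem}---applied with $a=w_j^i$, $b=a_j^i$, $c=v_j^i$, using $2$-connectivity of $B_j^i$---yields a $w_j^i$--$a_j^i$--$v_j^i$~path. Concatenating $R_j^i$, this inner path, and the $T_i$-branch from $v_j^i$ to $c_i$ produces a $z$-to-$c_i$ path through $a_j^i$; the three such paths together form the theta graph $\theta_i$ with one branch vertex at $z$ (if $c_i$ is a cutvertex of $G-z$ the other branch vertex is $c_i$; if $c_i$ is a block, first reroute the three incoming branches inside $c_i$ to a common cutvertex via $2$-connectivity of $c_i$). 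Each of the three subdivided edges of $\theta_i$ meets $A$ at $a_j^i$, so Lemma~\ref{thetalem} delivers an even $A$-cycle $C_i\subseteq\theta_i$.

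The main obstacle is showing that $\theta_1,\ldots,\theta_k$ (and hence the $C_i$) are pairwise edge-disjoint. The $T_i$-portion of each $\theta_i$ consists of edges in blocks of $G-z$ belonging to the subtree $T_i$ of $\mathcal{T}'$; since the $T_i$'s are vertex-disjoint in $\mathcal{T}'$ and distinct blocks of $G-z$ share at most one cutvertex but no edge, these portions are pairwise edge-disjoint. The escape paths $R_j^i$ live in pruned branches of the block-tree, which are attached at distinct cutvertices of distinct leaves of $\mathcal{T}'$ and hence are pairwise edge-disjoint and disjoint from every $T_i$-portion; the only delicate case is two leaves of $\mathcal{T}'$ that share a cutvertex of $G-z$ (possible when they are adjacent in $\mathcal{T}'$ through a suppressed degree-$2$ cutvertex), but such a shared cutvertex has degree~$2$ in the block-tree and therefore supports no pruned branch, so is never used as an escape attachment $w_j^i$. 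This yields the required $k$ pairwise edge-disjoint even $A$-cycles, contradicting~\eqref{nok}.
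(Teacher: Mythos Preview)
Your proof is correct and follows the same strategy as the paper's: from each of the $k$ disjoint subtrees build a theta graph with branch vertex $z$ whose three subdivided edges each pass through $A$, then invoke Lemma~\ref{thetalem} to obtain $k$ edge-disjoint even $A$-cycles contradicting~\eqref{nok}. The paper's phrasing differs only cosmetically---it first extends each subtree of $\mathcal{T}'$ to a subtree of the full block-tree whose three leaves are leaf-blocks of $G-z$ (thereby absorbing your ``escape paths'' into the subtree itself), and at the centre it connects the three incoming paths by a small tree $S$ in the central block rather than ``rerouting to a common cutvertex'' (which is the right fix for your one imprecise phrase, since three entry cutvertices in a block need not admit three internally disjoint paths to a single one of them).
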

\begin{proof}
Suppose there are such subtrees. Then there are also $k$ disjoint subtrees $T_1,\ldots, T_k$
of the block-tree of $G-z$ such that:
\begin{enumerate}[\rm (i)]
\item $T_i$ has exactly three leaves $L_1,L_2,L_3$, and these are leaf-blocks of the block-tree of $G-z$; and
\item\label{clawb} if $B_i$ is the degree~$3$-vertex in $T_i$ then each of the $B_i$--$L_j$~paths in $T_i$
contains  a block, not $B_i$, that contains a vertex from $A$.
\end{enumerate}
For~\eqref{clawb}, note that the leaves of $\mathcal T'$ lie in $\mathcal A_{\mathcal S}\cup\nots$.

Consider  a $T_i$. Then, for $j=1,2,3$, there is, by~\eqref{leafblock}, a $B_i$--$z$~path $Q_j\subseteq G$ 
contained in the union of $z$ with the blocks of $G-z$ in the $B_i$--$L_j$~path
of $T_i$. 
Moreover, because of~\eqref{clawb} and Lemma~\ref{abclem} 
we can ensure that each $Q_j$ passes through some vertex
in $A$.  Let $S$ be a tree in $B_i$ that contains 
the endvertices of $Q_1,Q_2,Q_3$. Then $S\cup Q_1\cup Q_2\cup Q_3$ contains a theta graph $\theta_i$ 
such that each of its subdivided edges passes through some vertex in $A$ (as $Q_1,Q_2,Q_3$ pass through $A$). As
$\theta_1,\ldots,\theta_k$ are pairwise edge-disjoint and as, by Lemma~\ref{thetalem}, each $\theta_i$ contains 
an even $A$-cycle, we have again obtained $k$ edge-disjoint even $A$-cycles in contradiction to~\eqref{nok}. 
\end{proof}

\begin{lemma}\label{longpathlem}
$\mathcal T'$ does not contain any path of length~$15k$.
\end{lemma}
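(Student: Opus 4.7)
The plan is to argue by contradiction: supposing $\mathcal{T}'$ contains a path $P = v_0 v_1 \ldots v_{15k}$, I would produce $k$ edge-disjoint even $A$-cycles, contradicting~\eqref{nok}.

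First I would analyse the internal vertices of $P$. Because $\mathcal{T}'$ retains a degree-$2$ vertex only if it lies in $\mathcal A_{\mathcal S}\cup\nots$, every internal vertex of $P$ is either a block of $G-z$ meeting $A$ or a vertex of degree at least~$3$ in $\mathcal T'$ carrying a spur branching off~$P$. Were $3k$ internal vertices of the latter type, say $b_1,\ldots,b_{3k}$ in order along $P$, grouping them into consecutive triples and gluing each triple's three spurs to the segment of $P$ between its outer two branches would yield $k$ pairwise disjoint subtrees of $\mathcal T'$, each with three leaves, contradicting Lemma~\ref{clawlem}. Hence $P$ carries at most $3k-1$ branching vertices, leaving at least $12k$ internal vertices that are blocks of $G-z$ meeting $A$. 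In each such block $B_i$ I pick $a_i\in A\cap V(B_i)$, noting by Lemma~\ref{blockdeglem} that $a_i$ has at most two neighbours in $B_i$.

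Next I would lift $P$ to the corresponding path $P^*$ in the block-tree of $G-z$ and invoke~\eqref{G2con}: since $G$ is $2$-connected, for every cutvertex $c$ of $G-z$ on $P^*$---whether $c$ survives in $\mathcal T'$ or was suppressed during its construction---the vertex $z$ must have a neighbour on each side of $c$ in $G-z$. This supplies many $z$-neighbours accessible along $P^*$, interleaved with the $12k$ $A$-blocks.

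Finally I would partition the $A$-blocks into $k$ disjoint consecutive groups along $P^*$ and build, in each group, a theta graph $\theta_i$ with $z$ as one branch vertex and some pivot vertex deep inside the group as the other. The three internally disjoint $z$-to-pivot paths I would assemble from three distinct $z$-neighbours supplied by that group, and Lemma~\ref{abclem} lets me route two of them through a prescribed~$a_i$, so that two of the three subdivided edges of $\theta_i$ meet $A$. Lemma~\ref{thetalem} then extracts an even $A$-cycle from~$\theta_i$. Since distinct groups occupy disjoint segments of $P^*$ and use disjoint $z$-edges, the $k$ resulting even $A$-cycles are edge-disjoint, contradicting~\eqref{nok}. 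The hardest part will be the theta-graph assembly itself, where I need three genuinely internally disjoint $z$-to-pivot paths per group with two of them visiting $A$, while preserving edge-disjointness across groups; the twelvefold redundancy of $A$-vertices per group together with the routing flexibility of Lemma~\ref{abclem} should make this manageable, and funnelling everything through Lemma~\ref{thetalem} conveniently sidesteps any explicit parity book-keeping on path lengths.
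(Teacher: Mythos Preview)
Your overall architecture matches the paper's: argue by contradiction, dispose of the case of $\geq 3k$ branching vertices on the path via Lemma~\ref{clawlem}, then use the remaining $\geq 12k$ degree-$2$ vertices (which lie in $\mathcal A_{\mathcal S}\cup\nots$) to manufacture $k$ edge-disjoint theta graphs whose subdivided edges meet $A$, and finish with Lemma~\ref{thetalem}. The paper partitions into $3k$ segments with four $A$-blocks each, extracts one $z$-access path per segment, and then combines three consecutive segments into a theta; your plan of $k$ groups each hosting its own theta is a harmless variant of this.

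There is, however, a genuine gap in your middle step. The observation ``$z$ has a neighbour on each side of every cutvertex $c$ on $P^*$'' is correct, but it does \emph{not} imply that $z$-neighbours are \emph{interleaved} with the $12k$ $A$-blocks along $P^*$. Nothing in your argument excludes the scenario in which all neighbours of $z$ lie near the two ends of $P^*$ (or in branches hanging off those ends); every cutvertex would still see $z$-neighbours on both sides, yet your interior groups would have no local $z$-access whatsoever, and the three $z$-to-pivot paths you need per group could not be made edge-disjoint across groups. The paper supplies exactly the missing ingredient: since the $12k$ degree-$2$ vertices of $\mathcal T'$ lie in $\mathcal A_{\mathcal S}\cup\nots$, no two of them lie in a common string. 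Consequently the block-tree segment between any two consecutive such blocks on $P^*$ must violate one of the string axioms---it contains a block-tree vertex of degree $\geq 3$ or a neighbour of $z$---and in either case~\eqref{leafblock} yields a $z$-path that leaves $P^*$ only through that segment (and possibly a side-branch attached there). This is what localises a $z$-access point between every pair of consecutive $A$-blocks and hence guarantees enough of them inside each group. Once you insert this string observation in place of your $2$-connectivity appeal, your construction goes through essentially as in the paper.
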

\begin{proof}
Suppose that $\mathcal T'$ contains a path $\mathcal P'$ of length at least $15k$.
We first treat the case when 
$\mathcal P'$ 
contains at least $3k$ vertices that have degree at least~$3$ in $\mathcal T'$.
By always grouping  three consecutive such vertices together with their branches hanging off
${\mathcal P'}$ we  find $k$  disjoint subtrees of  
$\mathcal T'$ that each contain three leaves of $\mathcal T'$. This, however, is impossible by Lemma~\ref{clawlem}.

Therefore, $\mathcal P'$ needs to contain at least $12k$ vertices of degree~$2$. By definition
of $\mathcal T'$ each such vertex lies in $\mathcal A_{\mathcal S}\cup\nots$.
Observe that
then there is also  a path $\mathcal P$ in the block-tree of $G-z$ that contains $12k$ vertices that lie in $\mathcal A_{\mathcal S}\cup\nots$. 
We partition $\mathcal P$ 
into edge-disjoint subpaths $\mathcal P_1,\ldots, \mathcal P_{3k}$ such that each subpath $\mathcal P_s$
contains at least four vertices from $\mathcal A_{\mathcal S}\cup\nots$. 

\begin{figure}[ht]
\centering
\begin{tikzpicture}
\def\crad{0.4}

\foreach \i in {0,...,10}{
\block{(2*\crad*\i,0)}
}

\node at (4*\crad,-0.7) {$B_{i_1}$};
\node at (8*\crad,-0.7) {$B_{i_2}$};
\node at (12*\crad,-0.7) {$B_{i_3}$};
\node at (18*\crad,-0.7) {$B_{i_4}$};

\draw[patedg] (-\crad,0) to ++(2*\crad,0) to ++(2*\crad,0) to ++(2*\crad,0) to ++(2*\crad,0) to ++(2*\crad,0)
 to ++(2*\crad,0) to ++(2*\crad,0) to ++(2*\crad,0) to ++(2*\crad,0) to ++(2*\crad,0) to ++(2*\crad,0); 

\node[tinyvx] (a1) at (4*\crad-0.1,0){};
\node[tinyvx] (a2) at (18*\crad-0.05,0){};
\node[tinyvx,label=left:$b_0$] (b0) at (-\crad,0){};
\node[tinyvx,label=right:$b_\ell$] (bl) at (21*\crad,0){};

\node[vx,label=right:$z$] (z) at (10*\crad,2){};
\node[tinyvx,label=below right:$x$] (x) at (9*\crad+0.2,0){};
\draw[edg,bend right=10] (z) to node[auto,swap,midway] {$Q$} (x);

\node[inner sep=0cm] (P) at (2,1){$P$};
\draw[pointer,bend left=20] (P) to (6*\crad,0);

\node[inner sep=2pt] (A1) at (1.2,1){$\in A$};
\draw[pointer,bend left=20] (A1) to (a1);

\node[inner sep=2pt] (A2) at (7,1){$\in A$};
\draw[pointer,bend left=20] (A2) to (a2);

\draw[thick,dunkelgrau] (-\crad,-1) -- ++(0,-0.1) to coordinate[midway] (m) ++(22*\crad,0) -- ++(0,0.1);
\draw[thick,dunkelgrau] (m) to node[below,text=black] {$\mathcal P_s$} ++(0,-0.1);

\node[align=center,inner sep=2pt] (B) at (14*\crad,1.8) {blocks\\of $G-z$};
\draw[pointer,bend right=20] (B) to (12*\crad,\crad);
\draw[pointer] (B) to (14*\crad,\crad);
\draw[pointer,bend left=20] (B) to (16*\crad,\crad);

\end{tikzpicture}
\caption{$R_s$ in the proof of Lemma~\ref{longpathlem}}\label{Rfig}
\end{figure}
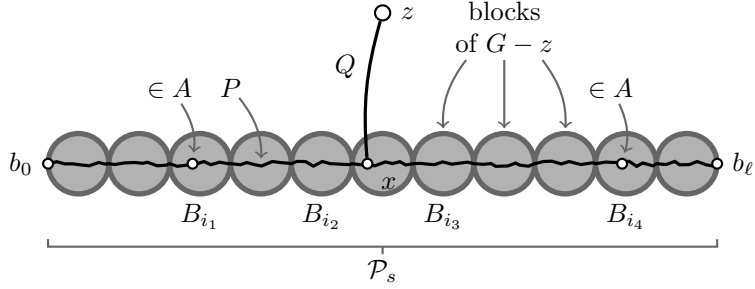

Consider an arbitrary subpath $\mathcal P_s=b_0B_1b_1\ldots b_{\ell-1}B_\ell b_\ell$. 
Let $B_{i_1}$, $B_{i_2}$, $B_{i_3}$, $B_{i_4}$ with $i_1<i_2<i_3<i_4$ be blocks 
in $\mathcal A_{\mathcal S}\cup\nots$. 
Pick a $b_0$--$b_\ell$~path $P$ that passes through every block $B_i$. Moreover, 
we can ensure that $P$ passes through a vertex of $A$ in $B_{i_1}$ and in $B_{i_4}$
(since the $B_i$ are either a single edge or $2$-connected).

Note that
$B_{i_2}$ and $B_{i_3}$ do not lie in a common string:
indeed, if $B_{i_2},B_{i_3}\in\mathcal A_{\mathcal S}$ then they lie in distinct strings, and 
distinct strings are disjoint.
As a consequence, the subpath $b_{i_2-1}B_{i_2}\ldots B_{i_3}b_{i_3}$ of $\mathcal P_s$
must contain either a vertex that has degree~$\geq 3$ in the block-tree or some 
 neighbour of $z$. Thus, there is, by~\eqref{leafblock}, a $P$--$z$~path $Q$ that starts in some vertex $x\in \bigcup_{j=i_2}^{i_3}B_j$.
Note that each subpath $b_0Px$ and $xPb_\ell$ contains a vertex of $A$, namely some vertex in $B_{i_1}$ and in $B_{i_4}$. 
Denote $P\cup Q$ by $R_s$, and note that $R_1,\ldots, R_{3k}$ are pairwise edge-disjoint.

Now, we observe that each of $R_1\cup R_2\cup R_3$,\dots, $R_{3k-2}\cup R_{3k-1}\cup R_{3k}$ 
contains a theta graph such that two of its subdivided edges pass through $A$. By Lemma~\ref{thetalem},
we thus find $k$ edge-disjoint even $A$-cycles, which we had excluded~\eqref{nok}.
\end{proof}

\begin{lemma}\label{starlem}
$\mathcal T'$ does not contain a vertex of degree at least~$3k$.
\end{lemma}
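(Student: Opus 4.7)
The plan is to prove the lemma by contradiction: I assume some $v\in\mathcal T'$ has degree at least $3k$ in $\mathcal T'$ and produce $k$ pairwise edge-disjoint even $A$-cycles in $G$, violating~\eqref{nok}.

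First I set up the branches. The $\geq 3k$ neighbors of $v$ in $\mathcal T'$ correspond to $\geq 3k$ pairwise (outside of $v$) vertex-disjoint subtrees $\mathcal B_1,\dots,\mathcal B_{3k}$ of the block-tree of $G-z$. Since every leaf of $\mathcal T'$ lies in $\mathcal A_{\mathcal S}\cup\nots$, each $\mathcal B_i$ contains a block meeting $A$, and hence some $a_i\in A$. Every $\mathcal B_i$ also contains a leaf-block of the block-tree of $G-z$, which by~\eqref{leafblock} supplies a neighbor $x_i$ of $z$. Repeated use of Lemma~\ref{abclem} inside the $2$-connected blocks of $\mathcal B_i$ gives a path $Q_i\subseteq G[\mathcal B_i\cup\{z\}]$ from the gateway of $\mathcal B_i$ (either $v$ itself, if $v$ is a cutvertex, or the gateway cutvertex $c_i\in V(B)$, if $v=B$ is a block) through $a_i$ to $z$ via $x_iz$. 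The $Q_i$ are pairwise edge-disjoint since the $\mathcal B_i$ are vertex-disjoint outside $v$ and the $x_i$ lie in distinct $\mathcal B_i$.

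If $v$ is a cutvertex, every $Q_i$ starts at $v$, so partitioning the branches into $k$ triples $\{i,j,l\}$, the three paths $Q_i,Q_j,Q_l$ of each triple share only $v$ and $z$, each passes through $A$, and together form a theta graph with branch vertices $v,z$ in which all three subdivided edges meet $A$; Lemma~\ref{thetalem} then extracts an even $A$-cycle. Disjoint triples yield $k$ pairwise edge-disjoint even $A$-cycles, contradicting~\eqref{nok}.

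If $v=B$ is a block, $B$ is $2$-connected and the $c_i$ are distinct vertices of $B$. For each triple $\{i,j,l\}$ the fan lemma in the $2$-connected $B$ provides internally disjoint paths $P_{ij},P_{il}$ in $B$ from $c_i$ to $c_j$ and to $c_l$, whence $Q_i$, $P_{ij}\cup Q_j$, $P_{il}\cup Q_l$ form a theta graph with branch vertices $c_i,z$ each of whose three subdivided edges meets $A$; Lemma~\ref{thetalem} again yields an even $A$-cycle. The main obstacle is ensuring that the $k$ theta graphs obtained from the $k$ triples are pairwise edge-disjoint inside $B$: while the $Q_\bullet$-parts are automatically edge-disjoint, the $B$-parts $P_{ij},P_{il}$ from different triples could share edges. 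I plan to overcome this by exploiting the abundance of $3k$ gateway cutvertices together with the $2$-connectivity of $B$, either via a Menger/edge-disjoint-paths argument routing the $k$ fans edge-disjointly, or by contracting $B$ to a single vertex, reducing to the cutvertex case, and carefully lifting back through $B$. In either way, $k$ pairwise edge-disjoint even $A$-cycles result, again contradicting~\eqref{nok}.
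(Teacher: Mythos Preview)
Your cutvertex case is correct and essentially the same as the paper's. The gap is in the block case: you correctly identify the only real obstacle --- making the $k$ theta graphs edge-disjoint \emph{inside} the block $B$ --- but you do not actually overcome it; you only announce two possible plans. Neither works as stated. The fan/Menger argument gives, for each individual triple, two internally disjoint paths in $B$, but says nothing about edge-disjointness \emph{between} different triples; $2$-connectivity of $B$ and the mere existence of $3k$ gateway cutvertices do not by themselves let you route $k$ fans edge-disjointly. The second suggestion, contracting $B$ and ``carefully lifting back'', simply postpones the same routing problem to the lifting step.

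The paper closes exactly this gap with Lemma~\ref{Asublem}. One attaches a new pendant vertex $p_i$ to each gateway $c_i$ (so that the marked vertices have degree~$1$), takes a spanning tree of the resulting graph $X'$, and applies Lemma~\ref{Asublem} with $s=3$ and $Z=\{p_1,\dots,p_{3k}\}$: this yields $\lfloor 3k/3\rfloor=k$ \emph{edge-disjoint} subtrees $S_1,\dots,S_k$ of the spanning tree, each containing three of the $p_i$. The tree $S_j$ (with its three pendants deleted) plays the role of your $P_{ij}\cup P_{il}$, and the edge-disjointness of the $S_j$ is precisely what makes the $k$ theta graphs $S_j\cup Q_{\bullet}\cup Q_{\bullet}\cup Q_{\bullet}$ pairwise edge-disjoint. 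This single application of Lemma~\ref{Asublem} is the missing ingredient in your proposal; once you insert it, the block case goes through just like your cutvertex case.
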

\begin{proof}
Suppose there is such a vertex. Then, clearly, there is also a vertex $X$ in the block-tree of $G-z$
of degree at least $3k$. This implies that there are $3k$ paths $\mathcal P_1,\ldots, \mathcal P_{3k}$
in the block-tree between $X$ and the leaves of the block-tree such that pairwise the paths meet
exactly in $X$. Moreover, as the leaves of $\mathcal T'$ are a subset of $\mathcal A_{\mathcal S}\cup\nots$, we can 
choose the paths $\mathcal P_i$ such that each contains a block different from $X$ that contains a 
vertex from $A$.

We now pick for each $i$ a $X$--$z$~path $P_i\subseteq G$ 
such that $P_i-z$ is  contained in 
$\bigcup_{B\in\mathcal P_i}B$; this is possible because of~\eqref{leafblock}.
Since $\mathcal P_i$ contains a block different from $X$ that meets $A$, we can 
furthermore ensure that $P_i$ passes through $A$ (using Lemma~\ref{abclem} if necessary).

Seen in $G-z$, 
the vertex $X$ in the block-tree is either a block or a cutvertex. Extend $X$ to a graph $X'$ by
adding a new vertex $p_i$ for each $P_i$ and by connecting it to the endvertex of $P_i$ in $X$.
We apply Lemma~\ref{Asublem} to a spanning tree of $X'$, with the set $\{p_1,\ldots, p_{3k}\}$
in the role of $Z$. Thus, there are $k$ edge-disjoint trees $S_1,\ldots, S_k\subseteq X'$
that each contain three of the vertices in $\{p_1,\ldots, p_{3k}\}$. By changing the enumeration, 
we may assume that $S_1$ contains $p_1,p_2,p_3$, that $S_2$ contains $p_4,p_5,p_6$ and so on.

Now, for $i=1,\ldots, k$, the graph 
$S_i-\{p_{3i-2},p_{3i-1},p_{3i}\}\cup P_{3i-2}\cup P_{3i-1}\cup P_{3i}$
contains a theta graph $\theta_i$ such that all three of its subdivided edges pass through a vertex from $A$.
The $\theta_i$ are pairwise edge-disjoint and each contains, by Lemma~\ref{thetalem}, an even $A$-cycle ---
this is again a contradiction to~\eqref{nok}.
\end{proof}

\begin{proof}[Proof of Theorem~\ref{evenathm}]
Suppose first that $|\mathcal A_{\mathcal S}\cup\nots|\geq 45k^3$. Since $V(\mathcal A_{\mathcal S}\cup\nots)\subseteq V(\mathcal T')$
we may apply Lemma~\ref{alttreelem} with $\alpha=3k$, $\beta=k$ and $\gamma=15k$.
Each of the outcomes of Lemma~\ref{alttreelem}, however, leads to a contradiction as 
Lemmas~\ref{clawlem},~\ref{longpathlem} and~\ref{starlem} demonstrate. 

We conclude that $|\mathcal A_{\mathcal S}\cup\nots|< 45k^3$. Then $|\mathcal S\cup\nots|< 45k^3$. 
For each $Y\in \mathcal S\cup\nots$ we pick $F_Y$ as in Lemma~\ref{blockhitlem} or~\ref{stringlem}, depending
on whether $Y\in\mathcal S$ or $Y\in\nots$. In each case $|F_Y|\leq 24k^2$, which means that 
\[
F=\bigcup_{Y\in\mathcal S\cup\nots}F_Y\emtext{ has size }|F|\leq 45k^3\cdot 24 k^2=1080k^5
\]
Consider some even $A$-cycle $D$. Then, by~\eqref{Yheavy},
 $C$ must be $Y$-heavy for some $Y\in\mathcal S\cup\nots$
and thus meets $F_Y\subseteq F$. 
This shows that $F$ meets every even $A$-cycle. 
\end{proof}

\subsection*{Acknowledgement}
I thank Arthur Ulmer for greatly improving the construction in Proposition~\ref{modprop}.

\bibliographystyle{amsplain}
\bibliography{../erdosPosa/erdosposa}

\vfill

\small
\vskip2mm plus 1fill
\noindent
Version \today{}
\bigbreak

\noindent
Henning Bruhn-Fujimoto
{\tt <henning.bruhn@uni-ulm.de>}\\
Institut f\"ur Optimierung und Operations Research\\
Universit\"at Ulm\\
Germany\\

\end{document}